\documentclass[a4paper,10pt]{amsart}
\usepackage{amssymb,amsthm,amsmath}
\usepackage{ifthen}
\usepackage{graphicx}
\nonstopmode
%\numberwithin{equation}{section}
\setlength{\textwidth}{14.1cm} \setlength{\oddsidemargin}{1.3cm}
\setlength{\evensidemargin}{1.3cm} \setlength{\footskip}{38pt}
\pagestyle{plain} \numberwithin{equation}{section}
\newtheorem{thm}{Theorem}[section]
\newtheorem{cor}[thm]{Corollary}
\newtheorem{lem}[thm]{Lemma}
\newtheorem{prop}[thm]{Proposition}
\newtheorem{defn}[thm]{Definition}

\theoremstyle{definition}
\newtheorem{rmk}[thm]{Remark}
\newtheorem{example}[thm]{Example}

{\qed\bigskip}

\newcounter{alphabet}
\newcounter{tmp}

\ifx\undefined\bysame
\newcommand{\bysame}{\leavevmode\hbox to3em{\hrulefill}\,}
\fi

%\begin{document}
\pagestyle{myheadings}

\begin{document}
\baselineskip=21pt
%\hfill{pps04.tex}
\markboth{} {}

\bibliographystyle{amsplain}
\title[Behavior of Gabor frame operators on Wiener amalgam spaces]
%as sampling density tends to infinity
{Behavior of Gabor frame operators on Wiener amalgam spaces}
%as sampling density tends to infinity

\author{Anirudha poria} 

\address{Department of Mathematics,
Indian Institute of Technology Guwahati,
Guwahati 781039, \;\;  India.}
\email{a.poria@iitg.ernet.in}
\keywords{Gabor frame; Wiener amalgam spaces; Windowed Fourier transform; Frame operator; Sampling density; Walnut's representation; Janssen's representation; Wexler-Raz biorthogonality relations.} \subjclass[2010]{Primary
 42C15; Secondary 42B35, 42A38, 46B15.}

\begin{abstract} 
It is well known that the Gabor expansions converge to identity operator in weak* sense on the Wiener amalgam spaces as sampling density tends to infinity. In this paper we prove the convergence of Gabor expansions to identity operator in the operator norm as well as weak* sense on $W(L^p, \ell^q)$ as the sampling density tends to infinity. Also we show the validity of the Janssen's representation and the Wexler-Raz biorthogonality condition for Gabor frame operator on $W(L^p, \ell^q)$.
\end{abstract}
%\thanks{}
\date{\today}
\maketitle
\def\BC{{\mathbb C}} \def\BQ{{\mathbb Q}}
\def\BR{{\mathbb R}} \def\BI{{\mathbb I}}
\def\BZ{{\mathbb Z}} \def\BD{{\mathbb D}}
\def\BP{{\mathbb P}} \def\BB{{\mathbb B}}
\def\BS{{\mathbb S}} \def\BH{{\mathbb H}}
\def\BE{{\mathbb E}}
\def\BN{{\mathbb N}}
\def\LP{{W(L^p, \ell^q)}}
\def\L1{{W(\mathbb{R}^d)}}
%In above style or below style, you can define
%\newcommand{\cal {B}}{\mathcal{B}}
%\mathcal{}
\vspace{-.5cm}

\section{Introduction and the main result}
We begin with some elementary definition. The time-frequency shift $ \tau (t,\omega ) $ for functions $g$ on $ \BR^d $ is define by \[ \left(\tau (t,\omega )g \right)(x)=g(x-t) e^{i 2 \pi \langle x , \omega \rangle } , \;\;\; t,\omega \in \BR^d. \]

The windowed Fourier transform of $f \in L^2(\BR^d)$ with respect to $g \in L^2(\BR^d)$ is defined by \begin{equation} \label{eq1.1}
(F_g f)(t,\omega )= \langle f,\tau (t,\omega )g  \rangle . 
\end{equation} 

The inversion formula for windowed Fourier transform is given by
  \begin{equation} \label{eqi}
f= \frac{1}{\langle \gamma, g \rangle} \iint\limits_{\BR^{2d}} (F_g f)(t,\omega) \tau (t,\omega )\gamma dtd{\omega},
\end{equation} where  $\gamma \in L^2(\BR^d) $ satisfies $ \langle \gamma,g \rangle \neq 0 $ and the integral is convergent in $ L^2(\BR^d) $ norm (e.g., see \cite{gro}, p. 48).

Given a window $g \in L^2(\BR^d)$ and $a,b>0$, the collection $ \{ \tau (na,mb)g :m,n \in \BZ^d \} $ is a Gabor frame for $ L^2(\BR^d) $ if there exist constants $ A,B>0$ (called frame bounds) such that \[ A \Vert f \Vert_2^2 \leq \sum_{n,m \in \BZ^d} |\langle f,\tau (na,mb)g \rangle|^2 \leq B \Vert f \Vert_2^2, \;\;\; \forall f \in L^2 (\BR^d).  \]

If $ \{ \tau (na,mb)g :m,n \in \BZ^d \} $ is a Gabor frame then there exists a dual window $ \gamma \in L^2 (\BR^d) $ such that $ \{ \tau (na,mb)\gamma :m,n \in \BZ^d \} $ is also a Gabor frame for $ L^2 (\BR^d) $ and 
\begin{eqnarray} \label{eq1.2}
f &=& \sum_{n,m \in \BZ^d} \langle f,\tau (na,mb)g \rangle \tau (na,mb)\gamma\\
&=& \sum_{n,m \in \BZ^d} \langle f,\tau (na,mb)\gamma \rangle \tau (na,mb)g , \;\;\; \forall f \in L^2(\BR^d). \nonumber
\end{eqnarray}

The series in (\ref{eq1.2}) converge unconditionally in $L^2$, and
by the definition of frame, the $\ell^2 $-norm of the sequence of Gabor coefficients $ \{\langle f,\tau (na,mb)g \rangle \}$ is an equivalent norm for $L^2(\BR^d)$. We refer to \cite{gro} for a detailed study.\\
Define \begin{eqnarray} \label{eqs}
S_{a,b;g,\gamma}f = \frac{{(ab)}^d}{\langle \gamma,g \rangle} \sum_{n,m \in \BZ^d} \langle f,  \tau (na,mb)g \rangle  \tau (na,mb) \gamma.
\end{eqnarray}

Then $S_{a,b;g,\gamma}f$ can be regarded as a Riemannian sum of the integral in (\ref{eqi}). We may expect that it is well defined for every $f \in L^2(\BR^d)$, and that it converges to $f$ whenever $a$ and $b$ tend to zero. Weisz \cite{wei} proved that this is the case if both $g$ and $\gamma $ are in $ S_0(\BR^d) := \{g:F_g g \in L^1(\BR^{2d}) \} $, and he proved the convergence in various norms. Also it is well known that whenever $g \in S_0(\BR^d)$ and defines a Gabor frame, then the dual window is also in $S_0(\BR^d)$ (see \cite{lei1, lei2}).

When we consider $S_{a,b;g,\gamma}$ as a frame operator, the convergence of $S_{a,b;g,\gamma}f$ implies that $ \{ \tau (na,mb)g :m,n \in \BZ^d \} $ is asymptotically close to a tight frame. If $g,\gamma$ are in a bigger space than $ S_0(\BR^d)$ then the result of Weisz \cite{wei} does not imply the convergence of $S_{a,b;g,\gamma}f$ as $(a,b)$ tends to $(0,0)$. For example if $g=\gamma=\chi_{[0,1]}$, it is not clear whether $S_{a,b;g,\gamma}f$ is convergent to $f$ as $(a,b)$ tends to $(0,0)$ since neither $g$ nor $\gamma$ is in $ S_0(\BR^d) $. In \cite{sun} Sun gave a weaker condition on $g$ and $\gamma$ for $S_{a,b;g,\gamma}f$ to be convergent. He proved that if both $g$ and $\gamma$ are in the Wiener space
 \[ \L1 :=  \bigg\{ g: g \; \mathrm{is \; measurable \; and \; }  \Vert g \Vert_{\L1} := \sum\limits_{k \in \BZ^d} \Vert g \cdot T_k \chi_{[0,1)^d} \Vert_{\infty} < \infty \bigg\}, \] then $S_{a,b;g,\gamma}f$ is convergent to $ f $ on $ L^p(\BR^d) $ as $(a,b)$ tends to $(0,0)$. Note that $S_0(\BR^d)$ is a proper subspace of $\L1$ (see \cite{zim}, Theorem 3.2.13).

In this paper we extend Sun's \cite{sun} results and show if both both $g$ and $\gamma$ are in the Wiener space then $S_{a,b;g,\gamma}f$ converges to $ f $ on amalgam space $ \LP $, $1\leq p,q < \infty$,  as $(a,b)$ tends to $(0,0)$. 
%and with the help of Walnut’s representation, we show that $S_{a,b;g,\gamma}$ is convergent to the identity operator in operator norm of $\LP$ for any $1 \leq p,q \leq \infty $ provided $g,\gamma \in \L1 $ and $\overline{g} \cdot \gamma$ is locally Riemann integrable, i.e., it
%is Riemann integrable on $[-A,A]^d$ for any $A>0$. Specifically, we have the following.
Our main result is the following theorem.
\begin{thm}\label{thm1}
Let $ g,\gamma \in \L1$.Then we have:\\
$(i)$ For any $f \in \LP$, $ 1 \leqslant p,q < \infty, $ 
\begin{equation} \label{eq1} \lim_{(a,b)\rightarrow (0,0)} \Vert S_{a,b;g,\gamma}f -f \Vert_{\LP} =0 \end{equation} and conclusion holds for $q= \infty$ but fails if $ p=\infty .$\\
$(ii)$ Moreover, if $\overline{g} \cdot \gamma$ is locally Riemann integrable, then for any  $ 1 \leqslant p,q \leqslant \infty, $ 
\begin{equation} 
\lim_{(a,b)\rightarrow (0,0)} \Vert S_{a,b;g,\gamma} -I \Vert_{\LP \rightarrow \LP} =0. \end{equation} 
\end{thm}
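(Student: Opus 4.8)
The plan is to reduce everything to Walnut's representation of the operator and then separate its ``diagonal'' contribution from the ``off-diagonal'' one. Summing first over $m$ (Poisson summation in the modulation variable) one obtains, for $f$ in a suitable dense class,
\[ S_{a,b;g,\gamma}f(x) = \frac{a^d}{\langle\gamma,g\rangle}\sum_{k\in\BZ^d} G_k^{a,b}(x)\, f\!\left(x-\tfrac{k}{b}\right),\qquad G_k^{a,b}(x):=\sum_{n\in\BZ^d}\gamma(x-na)\,\overline{g\!\left(x-na-\tfrac{k}{b}\right)}. \]
The $k=0$ term is a multiplication operator whose symbol is $\frac{1}{\langle\gamma,g\rangle}a^d\sum_n(\overline g\gamma)(x-na)=\frac{1}{\langle\gamma,g\rangle}P_a(\overline g\gamma)(x)$, the $a$-periodization of $\overline g\gamma$, whose mean over a period is $\int\overline g\gamma=\langle\gamma,g\rangle$. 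One therefore expects the $k=0$ term to tend to the identity while the terms with $k\neq0$, which involve $g$ shifted by the large amount $k/b$ (and hence measure the correlation $\langle\gamma,T_{k/b}g\rangle$), to disappear as $(a,b)\to(0,0)$. The theorem is the rigorous version of these two heuristics, measured in the $\LP$ norm.

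The first genuine step is a uniform boundedness estimate: I would show $\sup_{0<a,b\le1}\|S_{a,b;g,\gamma}\|_{\LP\to\LP}<\infty$, and this is exactly where the hypothesis $g,\gamma\in\L1$ enters. Because the local sup-norms of $g$ and $\gamma$ are summable, the correlation functions $G_k^{a,b}$ can be controlled in the $W(L^\infty,\ell^1)=\L1$ norm, and one verifies that $a^d\sum_{k}\|G_k^{a,b}\|_{\L1}$ stays bounded independently of $(a,b)$ in the unit square. Since multiplication by a function in $\L1$ followed by a translation is bounded on every amalgam space $\LP$ with norm controlled by that $\L1$ norm (uniformly in the amount of translation), summing over $k$ yields the uniform bound. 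This is the technical heart of the argument and is where the amalgam structure is used most heavily.

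For part $(i)$ I would then argue by density. When $1\le p,q<\infty$ the time-frequency shifts act strongly continuously on $\LP$ and the nice functions (say $C_c^\infty$, or the Schwartz class) are dense; the operators being uniformly bounded, it suffices to prove $\|S_{a,b;g,\gamma}f-f\|_{\LP}\to0$ for such $f$. For these $f$ one handles the representation termwise: the $k=0$ symbol tends to $1$ boundedly, and since $f$ is compactly supported and $|f|^p$ integrable, dominated convergence (first in each unit cube, then in the outer $\ell^q$ sum, using $q<\infty$) kills the diagonal error, while the off-diagonal sum is $O(\epsilon(a,b))\|f\|_{\LP}$ with $\epsilon(a,b)\to0$ by the decay of $\gamma$. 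Equivalently, one may read $S_{a,b;g,\gamma}f$ as the Riemann sum of the inversion integral (\ref{eqi}) and invoke norm convergence of Riemann sums of the continuous, rapidly decaying $\LP$-valued integrand $(t,\omega)\mapsto(F_gf)(t,\omega)\,\tau(t,\omega)\gamma$. The borderline $q=\infty$ (with $p<\infty$) has no dense class of test functions available, so it must be treated by a direct estimate, carried out uniformly over the $\ell^\infty$-index using only the good local-$L^p$ continuity; the genuinely bad case is $p=\infty$, where the local factor $L^\infty$ destroys both density of nice functions and strong continuity of time-frequency shifts, which accounts for the stated failure there.

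Finally, part $(ii)$ upgrades the diagonal estimate from strong to uniform convergence. If $\overline g\gamma$ is locally Riemann integrable then, combined with $\overline g\gamma\in\L1$ controlling the tails, its periodization $P_a(\overline g\gamma)$ converges to $\langle\gamma,g\rangle$ uniformly on $\BR^d$; hence the $k=0$ symbol tends to $1$ in $L^\infty$, and since multiplication by an $L^\infty$ function has operator norm on $\LP$ at most its sup-norm, the diagonal part converges to $I$ in operator norm for every $1\le p,q\le\infty$. The off-diagonal part is shown to have operator norm $O(\epsilon(a,b))$ directly from the Wiener estimate of the second paragraph, uniformly in $p,q$. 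The main obstacle throughout is the uniform-in-$(a,b)$ boundedness via Walnut's representation and, for $(ii)$, extracting genuine operator-norm (not merely strong) decay of the off-diagonal part; the Riemann integrability hypothesis is used only to convert the mean convergence of the periodization into uniform convergence, which is precisely what the operator-norm statement demands.
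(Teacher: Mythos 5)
Your overall architecture --- Walnut's representation, the diagonal/off-diagonal splitting, uniform boundedness from controlling the correlation functions $G_{a,b;n}$, and for part (ii) the upgrade to uniform convergence of the periodization via local Riemann integrability --- matches the paper, and your sketch of part (ii) is essentially the paper's proof (its Lemmas \ref{lem1}, \ref{lem2} and \ref{lem3}). The genuine gap is in part (i), at the single most delicate point: you assert that the diagonal symbol $G_a=\frac{a^d}{\langle\gamma,g\rangle}\sum_n(\overline g\gamma)(\cdot-na)$ ``tends to $1$ boundedly'' and then invoke dominated convergence. For general $g,\gamma\in\L1$ this pointwise convergence is false: the periodization of $\overline g\gamma$ is a Riemann sum of a function that is merely measurable, and the paper's own counterexample $g=\gamma=\chi_E$ with $E\subset[0,1]$ nowhere dense of positive measure produces a $G_a$ that vanishes on a set of positive measure for every $a>0$. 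Knowing only that $G_a$ has mean $1$ over each period cube gives no pointwise or in-measure information. Your fallback --- reading $S_{a,b;g,\gamma}f$ as a Riemann sum of the inversion integral and invoking norm convergence of Riemann sums of a continuous decaying integrand --- also fails here, because for $f\in\LP$ and $g\in\L1$ the integrand need not be absolutely (Bochner) integrable, $F_gf\notin L^1(\BR^{2d})$ in general, so there is no convergent Riemann-sum approximation to appeal to.

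What actually closes this gap in the paper (Lemma \ref{lem4.5}, following Sun) is an indirect route through the Hilbert-space theory: the uniform $L^2$-bounds on $S_{a,b;g,g}$ and $S_{a,b;\gamma,\gamma}$ give $S_{a,b;g,\gamma}f\to f$ in $L^2$ (Theorem \ref{thm3}), which combined with the vanishing of the off-diagonal part yields $\|(G_a-1)h\|_2\to0$ for all $h\in L^2$; Chebyshev's inequality converts this into convergence of $G_a$ to $1$ in measure on compact sets, and together with the uniform bound $M_0=\sup_{0<a\le1}\|G_a-1\|_\infty<\infty$ and the absolute continuity of $\int|f|^p$ this gives $\|(G_a-1)f\|_p\to0$ for every $f\in L^p$, $1\le p<\infty$. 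The passage to $\LP$ is then made by splitting the outer $\ell^q$-sum into a tail (controlled by $M_0$ and the smallness of $\sum_{\|k\|_\infty>N}\|f\cdot T_k\chi_Q\|_p^q$) and finitely many cubes on which the $L^p$ lemma applies cube by cube. Your ``uniform boundedness plus dense class'' organization could be made to work, but only after the in-measure convergence of $G_a$ has been established by some such argument; without that $L^2$ detour your proof of the diagonal estimate does not go through.
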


Recent results (\cite{por}) have shown weak convergence of Hilbert space valued Gabor expansions on weighted amalgam spaces. So if we consider Hilbert space valued Gabor expansions then whether it converge to identity operator in the operator norm, seems to be a reasonable question. Also same question one can ask for modulation spaces as we know weak convergence of Gabor expansions on modulation spaces (\cite{wei}). For the time being we leave these two questions as open. In section 5, we reinvestigated two known results: the Janssen's representation and the Wexler-Raz biorthogonality on $\LP$ for Gabor frame operator and these are two important results in this context.

The paper is organized as follows. In section 2, we provide necessary background and some of the properties of Wiener amalgam spaces. In section 3, we provide some important results of Gabor frame operators on $L^2(\BR^d)$. In section 4, we prove the strong and weak* convergence result for Gabor frame operators on $\LP$, $1 \leq p,q < \infty .$ For $p=\infty$ we produce a counter example showing that (\ref{eq1}) does not hold in this case. Finally in section 5, we show that Janssen's representation and the Wexler-Raz biorthogonality condition hold for Gabor frame operator $S_{a,b;g,\gamma}$ on $W(L^p, \ell^q)$.

\section{Wiener amalgam spaces}
Let fix $d \geq 1$, $d \in \BN$. For a set $\mathbb{X} \neq \emptyset $ let $ \mathbb{X}^d $ be its Cartesian product $ \mathbb{X}\times ...\times \mathbb{X} $ taken with itself $d$-times. For $ x=(x_1 ,...,x_d) \in \BR^d $ ~\mbox{and}~ $ y=(y_1 ,...,y_d) \in \BR^d $ set \[x \cdot y := \sum_{k=1}^d x_k y_k  \;\;\;\;\; and \;\;\;\; |x|:= \max_{k=1,...,d} |x_k|. \]

The $\ell_p $ $(1\leq p \leq \infty)$ space consists of all complex sequences $a=(a_k)_{k \in \BZ^d}$ for which \[ \Vert a \Vert_{\ell_p} := \left(\sum_{k \in \BZ^d} |a_k|^p \right)^{\frac{1}{p}} \;\;\; < \infty \] with the usual modification if $ p= \infty $.

We briefly write $ L_p$ or $L_p(\BR^d)$ instead of $L_p(\BR^d , \lambda)$ space equipped with the norm (or quasi-norm) $\Vert f \Vert_p := \left(\displaystyle\int_{\BR^d} |f|^p d\lambda \right)^{\frac{1}{p}} $, $(0 < p \leq \infty),$ where $ \lambda $ is the Lebesgue measure. 
The space of continuous functions on $\BR^d$ with the supremum norm is denoted by $C(\BR^d).$\\
Translation and modulation of a function $f$ are defined, respectively, by
\[ T_x f(t):=f(t-x) \;\; and \;\;\; M_\omega f(t):= e^{2 \pi i \langle\omega , t\rangle} f(t)\;\;\; (x,\omega \in \BR^d), \] where $i=\sqrt{-1}. $

Let $Q$ denote the unit cube $[0,1)^d $ and $Q_a=[0,a)^d$. The characteristic function of a measurable set $E$ is $\chi_E$. A measurable function $f$ belongs to the Wiener amalgam space $\LP $ $(1 \leqslant p,q \leqslant \infty )$ if \[ \Vert f \Vert_{\LP} := \left(\sum_{k \in \BZ^d} \Vert f \cdot T_k \chi_Q \Vert_p^q \right)^{\frac{1}{q}} \;\; < \;\; \infty , \] with the obvious modification for $q= \infty .$ The closed subspace of $ W(\BR^d)$ containing continuous functions is denoted by $ W(C, \ell^1)$ and called the Wiener algebra. It is easy to see that $ W(L^p, \ell^p) = L^p(\BR^d) ,$ \[ W(L^{p_1}, \ell^q) \hookleftarrow W(L^{p_2} , \ell^q) \;\; \;\;\;(p_1 \leq p_2) \] and
\[ W(L^p, \ell^{q_1}) \hookrightarrow W(L^p, \ell^{q_2}) \;\;\;\;\; (q_1 \leq q_2 ) \; , \] $ (1 \leq p_1 , p_2 , q_1 ,q_2 \leq \infty  ).$ Thus \[ W(L^{\infty}, \ell^1) \subset L^p (\BR^d) \subset W(L^1, \ell^{\infty}) \;\;\; (1 \leq p \leq \infty). \]

The K\"othe dual of $\LP$ is the space of all measurable functions
$g$ on $\BR^d$ such that $g \cdot \LP \subseteq L^1(\BR^d).$ It is equal to $W(L^{p'}, \ell^{q'}),$ where $1/p+1/p'=1/q+1/q'=1$ for all $1 \leq p, q \leq \infty $. The pairing 
 \[ \langle \cdot, \cdot \rangle : \LP \times W(L^{p'}, \ell^{q'}) \rightarrow \BC, \;\;\;\;\; \langle f, g \rangle= \int_{\BR^d}  f(x) \overline{g(x)} dx, \] is bounded. The collection of all bounded linear operators from $\LP$ to $\LP$ is denoted by $B(\LP)$.
 
The dual and K\"othe dual of the amalgam spaces are given in the next lemma.
\begin{lem}
Let $1/p+1/p'=1/q+1/q'=1$. Then\\
a. For $1 \leq p, q < \infty $, the dual space of $\LP$ is $ W(L^{p'}, \ell^{q'}).$\\
b. For $1 \leq p, q \leq \infty $ the K\"othe dual of $\LP$ is $ W(L^{p'}, \ell^{q'}).$
\end{lem}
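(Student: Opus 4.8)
The plan is to reduce both assertions to standard duality facts by viewing $\LP$ as a vector-valued sequence space. Since the cubes $\{Q+k\}_{k\in\BZ^d}$ tile $\BR^d$, I would introduce the \emph{unfolding} map $U$ that sends $f$ to the $L^p(Q)$-valued sequence $(Uf)_k(x)=f(x+k)$, $x\in Q$. Because $\Vert(Uf)_k\Vert_{L^p(Q)}=\Vert f\cdot T_k\chi_Q\Vert_p$, the map $U$ is an isometric isomorphism of $\LP$ onto $\ell^q(\BZ^d,L^p(Q))$. I would also record that under $U$ the integral pairing $\langle f,g\rangle=\int_{\BR^d}f\overline{g}$ turns into the canonical pairing $\sum_k\int_Q(Uf)_k\overline{(Ug)_k}$ of $\ell^q(\BZ^d,L^p(Q))$ with $\ell^{q'}(\BZ^d,L^{p'}(Q))$; this ensures that every abstract identification obtained below is realized by the concrete pairing named in the statement.

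For part (a) I would compose two classical identifications. First, the finite measure space $Q$ gives $(L^p(Q))^*\cong L^{p'}(Q)$ for $1\le p<\infty$. Second, for any Banach space $X$ and $1\le q<\infty$ one has the isometric identification $(\ell^q(\BZ^d,X))^*\cong\ell^{q'}(\BZ^d,X^*)$; as the index set carries counting measure (a purely atomic measure) this holds for \emph{every} $X$, with no Radon--Nikodym hypothesis. Taking $X=L^p(Q)$ and transporting back along $U$ yields $(\LP)^*\cong W(L^{p'},\ell^{q'})$ for $1\le p,q<\infty$, realized by $\langle\cdot,\cdot\rangle$ thanks to the pairing computation above.

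For part (b) I would argue directly, establishing both inclusions with matching norms. Sufficiency, namely that $W(L^{p'},\ell^{q'})$ sits inside the K\"othe dual, is a two-level H\"older estimate: on each cube $\int_{Q+k}|fg|\le\Vert f\cdot T_k\chi_Q\Vert_p\,\Vert g\cdot T_k\chi_Q\Vert_{p'}$, followed by $\ell^q$--$\ell^{q'}$ H\"older in $k$, giving $\int_{\BR^d}|fg|\le\Vert f\Vert_{\LP}\Vert g\Vert_{W(L^{p'},\ell^{q'})}$; crucially both H\"older inequalities hold for all $1\le p,q\le\infty$. For necessity, suppose $g$ is measurable with $g\cdot\LP\subseteq L^1$, and put $c_k=\Vert g\cdot T_k\chi_Q\Vert_{p'}$. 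Testing against functions supported in a single cube and using that the \emph{associate} space of $L^p(Q)$ is $L^{p'}(Q)$ for every $1\le p\le\infty$, I would first obtain each $c_k<\infty$, with $c_k$ the supremum of $|\int fg|$ over unit-$L^p$ functions supported in $Q+k$. To see $(c_k)_k\in\ell^{q'}$, I would glue: for $(\beta_k)\in\ell^q$ with $\beta_k\ge 0$, choose phase-aligned $f_k$ supported in $Q+k$ with $\Vert f_k\cdot T_k\chi_Q\Vert_p=1$ and $\int f_k g\ge(1-\varepsilon)c_k$, and set $f=\sum_k\beta_k f_k$; then $f\in\LP$ with $\Vert f\Vert_{\LP}=\Vert(\beta_k)\Vert_{\ell^q}$, while $\int|fg|\ge(1-\varepsilon)\sum_k\beta_k c_k$. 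Finiteness of $\int|fg|$ then forces $(c_k)_k\in\ell^{q'}$ by the converse H\"older for sequences, and letting $\varepsilon\to 0$ matches the norms.

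The main obstacle is the necessity half of (b), especially the gluing step and the correct treatment of the endpoints. The reason (b) covers all $1\le p,q\le\infty$ whereas (a) is confined to finite exponents is that at the endpoints one needs only the associate duals $(L^\infty(Q))'=L^1(Q)$ and $(\ell^\infty)'=\ell^1$, which are the correct spaces, rather than the topological duals, which are strictly larger. I would therefore phrase every norming/converse-H\"older step in terms of associate spaces, and verify carefully that the glued test function $f$ indeed has finite amalgam norm — this is exactly where disjointness of the cubes and the choice $(\beta_k)\in\ell^q$ enter.
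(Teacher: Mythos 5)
The paper itself offers no proof of this lemma; it simply records the statement and refers the reader to Fournier--Stewart \cite{fou}, so there is no ``paper's approach'' to compare against. Your argument is correct and self-contained, and it supplies exactly what the citation elides. Part (a) via the unfolding isometry $\LP\cong\ell^q(\BZ^d,L^p(Q))$ followed by $(L^p(Q))^*\cong L^{p'}(Q)$ and $(\ell^q(\BZ^d,X))^*\cong\ell^{q'}(\BZ^d,X^*)$ is the standard route, and you correctly flag the one point where a careless reader could go wrong: the vector-valued duality for $\ell^q$ needs no Radon--Nikodym hypothesis because counting measure is purely atomic, whereas the analogous statement for $L^q(\mu,X)$ with non-atomic $\mu$ would. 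Your translation of the pairing into $\sum_k\int_Q(Uf)_k\overline{(Ug)_k}$ is what makes the identification concrete rather than merely abstract. Part (b) is where the real work is, and your treatment is sound: the sufficiency is two-level H\"older valid at all exponents; for necessity, the local step (each $c_k<\infty$ and equals the norming supremum) uses only that the associate space of $L^p(Q)$ is $L^{p'}(Q)$ for \emph{all} $1\le p\le\infty$, and the global step correctly reduces to the sequence-level resonance (Landau) theorem via the disjointly supported glued function $f=\sum_k\beta_k f_k$, whose amalgam norm is $\Vert(\beta_k)\Vert_{\ell^q}$ precisely because the cubes tile. Your closing observation --- that (b) survives the endpoints $p=\infty$ or $q=\infty$ while (a) does not, because associate duals rather than topological duals are what the K\"othe dual sees --- is exactly the right explanation for the asymmetry between the two parts of the lemma. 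The only cosmetic caveat is that the resonance step deserves a one-line justification (closed graph or gliding hump) if this were written out in full, but it is a genuinely standard fact and invoking it is legitimate.
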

We refer to the paper of Fournier and Stewart \cite{fou} for detailed study on classical amalgam spaces.

\section{Gabor frame operators on $L^2(\BR^d)$}
In this section, we study some known result of convergence of $S_{a,b;g,g}f$ for $f \in L^2(\BR^d).$ 
When $g=\gamma$ then we simply write $S_{a,b}$ instead of $S_{a,b;g,g}$, i.e.,
\begin{equation} \label{eq3.1}
 S_{a,b}f=\frac{(ab)^d}{\Vert g \Vert_2^2} \sum_{n,m \in \BZ^d} \langle f,  \tau (na,mb)g \rangle  \tau (na,mb)g.
 \end{equation}
Recall that a sequence $\{ \phi_n : n \in \BZ \}$ in a Hilbert space $\mathcal{H}$ is said to be a Bessel sequence if 
 \[ \sum_{n \in \BZ} |\langle f,\phi_n \rangle|^2 < \infty, \;\;\; \forall f \in  \mathcal{H}.\]
 We refer to \cite{gro} for frames and Bessel sequences. It was shown that $\{ \phi_n : n \in \BZ \}$ is a Bessel sequence if and only if there exists some constant $ M < + \infty $ such that \[ \sum_{n \in \BZ} |\langle f,\phi_n \rangle|^2 < M \Vert f \Vert^2 , \;\;\; \forall f \in  \mathcal{H}. \]
 The constant $M$ is called the upper frame bound for the Bessel sequence.
We state the following result (see \cite{sun}) without proof, which will use in next section.
\begin{thm} \label{thm2}
Let $g \in L^2(\BR^d)$ and $S_{a,b}$  be define as in (\ref{eq3.1}), where $a,b >0$. Then the following assertions are equivalent.\\
$(i)$ For any $f \in L^2(\BR^d)$, there exist constants $ a_f , b_f > 0 $ such that $ S_{a,b} f $ is well defined whenever $0<a<a_f$ and $ 0<b<b_f $ and the limit $ \lim\limits_{(a,b) \rightarrow (0,0)} S_{a,b} f$ exists in the $L^2(\BR^d)$  sense.\\
$(ii)$  $ S_{a,b} $ is well defined on $L^2(\BR^d)$ for $a,b \in (0,1]$ and there exists some constant $M<+ \infty $ such that \[ \Vert S_{a,b} \Vert \leq M , \;\;\; \forall a,b \in (0,1]. \] 
$(iii)$ There is some constant $M<+ \infty $ such that $ \{(ab)^{d/2} \Vert g \Vert_2^{-1} \tau (na,mb)g : n,m \in \BZ^d  \} $ is a
Bessel sequence with upper frame bound $M$, $ \forall a,b \in (0,1]. $
\end{thm}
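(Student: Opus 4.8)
The plan is to prove the cyclic chain $(iii)\Rightarrow(ii)\Rightarrow(i)\Rightarrow(iii)$, in which the equivalence $(ii)\Leftrightarrow(iii)$ is soft frame theory while the implication $(i)\Rightarrow(iii)$ carries the real weight. First I would record that $S_{a,b}$ is nothing but the frame operator of the normalized family $\phi^{a,b}_{n,m}:=(ab)^{d/2}\Vert g\Vert_2^{-1}\tau(na,mb)g$: by (\ref{eq3.1}) one has $S_{a,b}f=\sum_{n,m}\langle f,\phi^{a,b}_{n,m}\rangle\,\phi^{a,b}_{n,m}$. Such an operator is positive and self-adjoint with $\langle S_{a,b}f,f\rangle=\sum_{n,m}|\langle f,\phi^{a,b}_{n,m}\rangle|^2$, so $S_{a,b}$ is well defined and bounded with $\Vert S_{a,b}\Vert\le M$ precisely when $\{\phi^{a,b}_{n,m}\}$ is Bessel with bound $M$, the optimal constants agreeing through $\Vert S_{a,b}\Vert=\sup_{\Vert f\Vert_2=1}\sum_{n,m}|\langle f,\phi^{a,b}_{n,m}\rangle|^2$. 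As this holds uniformly in $(a,b)$, it gives $(ii)\Leftrightarrow(iii)$ at once.

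For $(ii)\Rightarrow(i)$ I would combine the uniform bound with convergence on a dense set. Pairing the inversion formula (\ref{eqi}) (taken with $\gamma=g$) against $h$ yields the orthogonality relation $\Vert g\Vert_2^{-2}\iint_{\BR^{2d}}F_gf(t,\omega)\overline{F_gh(t,\omega)}\,dt\,d\omega=\langle f,h\rangle$; since $\langle S_{a,b}f,h\rangle=\frac{(ab)^d}{\Vert g\Vert_2^2}\sum_{n,m}F_gf(na,mb)\overline{F_gh(na,mb)}$, the sesquilinear form of $S_{a,b}$ is exactly the Riemann sum of that integral over the lattice $a\BZ^d\times b\BZ^d$. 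On a dense subspace on which these Riemann sums converge to the integral---for instance $f,h$ for which $F_gf\,\overline{F_gh}$ lies in the Wiener algebra $W(C,\ell^1)(\BR^{2d})$---one obtains $\langle S_{a,b}f,h\rangle\to\langle f,h\rangle$, and the uniform bound $\Vert S_{a,b}\Vert\le M$ promotes this to $L^2$-convergence $S_{a,b}f\to f$ for every $f$ by the standard $\varepsilon/3$ argument; in particular the limit in $(i)$ exists.

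The substance is $(i)\Rightarrow(iii)$, which I would approach with two ingredients. The first is a refinement inequality: the coarse lattice $a\BZ^d\times b\BZ^d$ sits inside every integer refinement $(a/p)\BZ^d\times(b/q)\BZ^d$, so the coarse family is a sub-collection of the fine one, and bookkeeping of the normalization gives $M(a,b)\le(pq)^dM(a/p,b/q)$ for the optimal Bessel bounds $M(\cdot)$. Hence control of $M$ at small scales transfers to all of $(0,1]^2$ with $p,q$ kept bounded. The second ingredient converts the $f$-by-$f$ hypothesis $(i)$ into a uniform statement at small scales. Writing $S^N_{a,b}$ for the finite, hence bounded, truncations of (\ref{eq3.1}), hypothesis $(i)$ forces $\sup_N\Vert S^N_{a,b}f\Vert<\infty$ for small $(a,b)$; a Baire category argument (were $\{\phi^{a,b}_{n,m}\}$ non-Bessel along some $(a_j,b_j)\to(0,0)$, the sets $\{f:\sup_N\Vert S^N_{a_j,b_j}f\Vert<\infty\}$ would be meager, and a residual $f$ would violate $(i)$) yields $M(a,b)<\infty$ on a whole neighborhood $(0,\delta)^2$ of the origin, which together with the refinement inequality settles well-definedness throughout $(0,1]^2$.

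I expect the main obstacle to be upgrading this finiteness to the uniform bound $\sup_{(0,\delta)^2}M(a,b)<\infty$. My plan is to use the lower semicontinuity of $(a,b)\mapsto\sup_N\Vert S^N_{a,b}f\Vert$ (a supremum of maps continuous in $(a,b)$) to replace the continuum $(0,\delta)^2$ by a countable dense subset, and then to invoke the uniform boundedness principle for the countable family $\{S^N_{a,b}\}$; the pointwise hypothesis $\sup_{(a,b)\in(0,\delta)^2,\,N}\Vert S^N_{a,b}f\Vert<\infty$ needed to run it is precisely the quantitative residue of the mere \emph{existence} of the limit in $(i)$, and verifying it is the delicate step. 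Feeding the resulting small-scale bound through the refinement inequality then covers $(a,b)$ bounded away from the origin and delivers the single constant $M$ of $(iii)$ valid for all $a,b\in(0,1]$.
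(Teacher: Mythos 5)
The paper never proves this statement: it is quoted ``without proof'' from \cite{sun}, so your attempt has to be measured against Sun's original argument rather than anything internal to this paper. Your skeleton is the right one and matches Sun's in structure: $(ii)\Leftrightarrow(iii)$ by identifying $\Vert S_{a,b}\Vert$ with the optimal Bessel bound of the normalized system (correct as written), $(ii)\Rightarrow(i)$ by density plus uniform boundedness, and $(i)\Rightarrow(iii)$ by the sublattice inequality $M(a,b)\le (pq)^d M(a/p,b/q)$ combined with a category/uniform-boundedness argument. But two of the three steps have genuine gaps. In $(ii)\Rightarrow(i)$ you establish only $\langle S_{a,b}f,h\rangle\to\langle f,h\rangle$ on a dense class, i.e.\ weak convergence. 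The $\varepsilon/3$ argument upgrades \emph{strong} convergence on a dense set to strong convergence everywhere; it does not upgrade weak to strong (uniform bounds plus weak convergence on a dense set yield only weak convergence everywhere). To get $\Vert S_{a,b}f-f\Vert_2\to 0$ for $f$ in the dense class you must in addition control $\Vert S_{a,b}f\Vert_2^2$, i.e.\ the \emph{double} Riemann sum $\frac{(ab)^{2d}}{\Vert g\Vert_2^4}\sum\sum F_gf(\lambda)\overline{F_gf(\lambda')}\langle \tau(\lambda)g,\tau(\lambda')g\rangle$, and show it tends to $\Vert f\Vert_2^2$ (or else bound $\sup_{\Vert h\Vert_2\le 1}|\langle S_{a,b}f-f,h\rangle|$ using the Bessel bound on $h$ via Cauchy--Schwarz). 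Neither appears in your sketch, and for $g$ merely in $L^2$ even producing a dense class of $f$ for which these Riemann sums converge is not automatic.

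The second gap is the one you flag yourself in $(i)\Rightarrow(iii)$: the pointwise hypothesis $\sup_{(a,b)\in(0,\delta)^2,\,N}\Vert S^N_{a,b}f\Vert<\infty$ over a \emph{fixed} square $(0,\delta)^2$ is not ``the quantitative residue of the existence of the limit,'' because the neighbourhood $(0,a_f)\times(0,b_f)$ in $(i)$ depends on $f$; for $(a,b)$ inside the fixed square but outside $f$'s own neighbourhood, $(i)$ gives no control whatsoever of $\Vert S_{a,b}f\Vert$. The repair is to deploy your refinement inequality \emph{per $f$}, at the level of the individual quadratic form rather than of the optimal bounds: existence of the limit gives $\sum_{n,m}|\langle f,\phi^{a,b}_{n,m}\rangle|^2\le C_f\Vert f\Vert_2$ for $(a,b)\in(0,\eta_f)^2$, and for arbitrary $(a,b)\in(0,1]^2$ one chooses integers $p,q\le\lceil 1/\eta_f\rceil$ with $(a/p,b/q)\in(0,\eta_f)^2$ to get $\sum_{n,m}|\langle f,\phi^{a,b}_{n,m}\rangle|^2\le \lceil 1/\eta_f\rceil^{2d}C_f\Vert f\Vert_2$ for \emph{all} $(a,b)\in(0,1]^2$. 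That is a pointwise-in-$f$ bound over the entire parameter square, and the uniform boundedness principle applied to the truncated analysis operators $f\mapsto(\langle f,\phi^{a,b}_{n,m}\rangle)_{|n|,|m|\le N}$ (which requires no countability of the index family, so the lower-semicontinuity/dense-subset detour is superfluous, and the Baire step becomes unnecessary as well) then delivers the single constant $M$ of $(iii)$. With these two repairs your outline becomes a complete proof.
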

For the case of $ g \neq \gamma, $ we have the following result.

\begin{thm} \label{thm3}
Let $g,\gamma \in L^2(\BR^d)$ be such that $\langle \gamma,g \rangle \neq 0.$ Let $ S_{a,b;g,\gamma} $ be defined as in (\ref{eqs}), where $a,b>0$. Suppose that there is some constant $M< + \infty $ such that \[ \Vert S_{a,b;g,g} \Vert \leq M \;\;\; and \;\;\; \Vert S_{a,b;\gamma,\gamma} \Vert \leq M ,\;\;\; \forall \; 0<a,b \leq 1. \] Then $ S_{a,b;g,\gamma} f $ is well defined for any $ f \in L^2(\BR^d)$ and \[ \lim_{(a,b) \rightarrow (0,0)} S_{a,b;g,\gamma} f = f , \;\; \forall f \in L^2(\BR^d). \]
\end{thm}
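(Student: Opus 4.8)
The plan is to prove the statement in two stages: first establish that $S_{a,b;g,\gamma}$ is well defined and uniformly bounded on $L^2(\BR^d)$, and then obtain the strong convergence to the identity by a polarization argument that reduces everything to the diagonal case governed by Theorem \ref{thm2}.

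For the first stage, I would read the hypothesis through the equivalence $(ii)\Leftrightarrow(iii)$ of Theorem \ref{thm2}: the bounds $\Vert S_{a,b;g,g}\Vert\le M$ and $\Vert S_{a,b;\gamma,\gamma}\Vert\le M$ say precisely that $\{(ab)^{d/2}\Vert g\Vert_2^{-1}\tau(na,mb)g\}$ and $\{(ab)^{d/2}\Vert\gamma\Vert_2^{-1}\tau(na,mb)\gamma\}$ are Bessel sequences with bound $M$ for every $a,b\in(0,1]$. Factoring $S_{a,b;g,\gamma}$ as analysis against $g$ followed by synthesis against $\gamma$, i.e. $f\mapsto(\langle f,\tau(na,mb)g\rangle)_{n,m}\mapsto\sum_{n,m}\langle f,\tau(na,mb)g\rangle\,\tau(na,mb)\gamma$, the Bessel bound for $g$ controls the $\ell^2$-norm of the coefficient sequence while the Bessel bound for $\gamma$ controls the synthesis operator (which is the adjoint of the analysis operator against $\gamma$). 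The two factors of $(ab)^{d}$ cancel against the prefactor $(ab)^d/\langle\gamma,g\rangle$, and a Cauchy--Schwarz estimate yields $\Vert S_{a,b;g,\gamma}f\Vert_2\le \frac{M\,\Vert g\Vert_2\Vert\gamma\Vert_2}{|\langle\gamma,g\rangle|}\,\Vert f\Vert_2$ for all $a,b\in(0,1]$. This proves well-definedness together with a uniform bound $\Vert S_{a,b;g,\gamma}\Vert\le C$ independent of $(a,b)$.

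For the convergence I would exploit that $(g,\gamma)\mapsto S_{a,b;g,\gamma}$ is, up to the scalar $1/\langle\gamma,g\rangle$, sesquilinear in the window pair. Writing $\widetilde S_{a,b;g,\gamma}=\langle\gamma,g\rangle\,S_{a,b;g,\gamma}$ for the unnormalized operator, the polarization identity expresses $\widetilde S_{a,b;g,\gamma}$ as $\tfrac14$ of an alternating sum of the diagonal operators $\widetilde S_{a,b;h,h}=\Vert h\Vert_2^2\,S_{a,b;h,h}$ with $h\in\{g+\gamma,\,g-\gamma,\,g+i\gamma,\,g-i\gamma\}$. Each such $h$ generates a Bessel sequence, since $\tau(na,mb)h$ is a linear combination of $\tau(na,mb)g$ and $\tau(na,mb)\gamma$ and a sum of Bessel sequences is Bessel; hence the hypothesis of Theorem \ref{thm2} holds for every nonzero $h$ in the list (the degenerate case $h=0$, which can occur for at most one term, contributes zero), and $(ii)\Rightarrow(i)$ guarantees that $\lim_{(a,b)\to(0,0)}S_{a,b;h,h}f$ exists in $L^2$ for every $f$. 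Since these are genuine $L^2$-norm limits, the finite polarization combination converges as well, so $\lim_{(a,b)\to(0,0)}S_{a,b;g,\gamma}f$ exists strongly for every $f\in L^2(\BR^d)$.

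It remains to identify the limits, and this is the step I expect to be the main obstacle. Each diagonal $S_{a,b;h,h}$ is positive and self-adjoint, so its strong limit $L_h$ is positive and self-adjoint; to show $L_h=I$ it suffices, by self-adjointness, to check $\langle L_h f,f\rangle=\Vert f\Vert_2^2$ for all $f$. Now $\langle S_{a,b;h,h}f,f\rangle=\frac{(ab)^d}{\Vert h\Vert_2^2}\sum_{n,m}|(F_h f)(na,mb)|^2$ is a Riemann sum for $\frac{1}{\Vert h\Vert_2^2}\iint_{\BR^{2d}}|(F_h f)(t,\omega)|^2\,dt\,d\omega$, which equals $\Vert f\Vert_2^2$ by the orthogonality relation $\Vert F_h f\Vert_{L^2(\BR^{2d})}^2=\Vert h\Vert_2^2\Vert f\Vert_2^2$. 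The delicate point is the convergence of these Riemann sums for a general window $h\in L^2$: continuity and integrability of $|F_h f|^2$ alone do not force it. I would circumvent this by noting that both $f\mapsto\langle L_h f,f\rangle$ and $f\mapsto\Vert f\Vert_2^2$ are continuous quadratic forms (the former because $\Vert L_h\Vert\le M$), so it is enough to verify their equality on a dense class of $f$ for which $F_h f$ is regular enough that Riemann-sum convergence is elementary, and then pass to the closure. Feeding $L_h=I$ back into the polarization identity and using the companion polarization of the inner product, the alternating sum of the scalars $\Vert h\Vert_2^2$ collapses to $\langle\gamma,g\rangle$, so $\lim_{(a,b)\to(0,0)}\widetilde S_{a,b;g,\gamma}f=\langle\gamma,g\rangle f$ and therefore $\lim_{(a,b)\to(0,0)}S_{a,b;g,\gamma}f=f$, as required.
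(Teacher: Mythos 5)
The paper itself states Theorem \ref{thm3} without proof, citing \cite{sun}, so there is no internal argument to compare against; your proposal has to be judged on its own. Your first two stages are correct and complete: converting the operator bounds into uniform Bessel bounds via $(ii)\Leftrightarrow(iii)$ of Theorem \ref{thm2}, factoring $S_{a,b;g,\gamma}$ through $\ell^2$ to get the uniform estimate $\Vert S_{a,b;g,\gamma}\Vert\le M\Vert g\Vert_2\Vert\gamma\Vert_2/|\langle\gamma,g\rangle|$, and polarizing the unnormalized operator over the windows $h=\gamma+i^k g$ (each of which inherits a uniform Bessel bound, with the degenerate $h=0$ term contributing nothing) to obtain existence of the strong limit. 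The final bookkeeping, in which the scalars $\Vert h\Vert_2^2$ recombine to $\langle\gamma,g\rangle$ once each diagonal limit is known to be the identity, is also correct.

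The genuine gap is exactly where you flagged it, and it is not closed. Theorem \ref{thm2}, as stated in the paper, asserts only that $\lim_{(a,b)\to(0,0)}S_{a,b;h,h}f$ \emph{exists}; it does not identify the limit, so proving $L_h=I$ is the entire analytic content of the theorem rather than a finishing touch. Your reduction to checking $\langle L_hf,f\rangle=\Vert f\Vert_2^2$ on a dense set is sound (continuity of both quadratic forms, plus the fact that a bounded operator with vanishing quadratic form on a complex Hilbert space is zero), but the dense class is never exhibited, and exhibiting one is the hard step: $h$ is an arbitrary $L^2$ window subject only to a Bessel condition, so no amount of smoothness or decay of $f$ alone forces the Riemann sums of $|F_hf|^2$ over $\BR^{2d}$ to converge to its integral --- continuity and integrability of $|F_hf|^2$ are all you get for free, and, as you note, they do not suffice. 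What does work is to take $f$ among time-frequency shifts of the Gaussian $\varphi$: then $|F_hf(t,\omega)|=|F_fh(-t,-\omega)|$ reduces everything to $|V_\varphi h|^2$, and the sub-mean-value property of the Bargmann transform places $V_\varphi h$ in $W(C,\ell^2)(\BR^{2d})$, hence $|V_\varphi h|^2$ in $W(C,\ell^1)(\BR^{2d})$, for which Riemann sums do converge to the integral uniformly over lattice shifts (a $2d$-dimensional analogue of Lemma \ref{lem3}). Without this Gaussian/Bargmann device, or an equivalent one such as Sun's own lemmas, the argument is incomplete. A cheap partial substitute you could add: since the full limit exists, $L_h$ commutes with every $\tau(t,\omega)$ (approximate $(t,\omega)$ by points of the shrinking lattices along a subsequence), so $L_h=c_hI$ by irreducibility of the time-frequency shifts, and lower Riemann sums of the nonnegative continuous function $|F_hf|^2$ on large cubes already give $c_h\ge1$; but the matching bound $c_h\le1$ still requires the regularity input above.
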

\section{Gabor frame operators on $\LP$}
It was shown in \cite{gro, hei, wal} that $S_{a,b;g,\gamma}$ is bounded operator form $L^p(\BR^d)$ to $L^p(\BR^d)$ for any $1\leqslant p\leqslant \infty$ whenever $g$ and $\gamma$ are in $\L1$. Further this result has been proved for amalgam spaces (see \cite{fei,oko}). In this section we prove the strong and weak* convergence result for Gabor frame operators $S_{a,b;g,\gamma}$ on $\LP$, $1 \leq p,q < \infty $ whenever $g,\gamma$ are in $\L1$. 

Before proving Theorem 1.1 we state the following result on the Wiener space $\L1$ which can be found in \cite{gro}.

\begin{prop} \label{com1}
 If $g \in \L1$ and $a>0$, then 
 \[\sum_{n \in \mathbb{Z}^d}|g(x-an)| \leq \left(1+\frac{1}{a}\right)^d\; {\parallel g\parallel}_{W(\BR^d)} ,\;\;a.e.\]
 \end{prop}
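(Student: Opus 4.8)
The plan is to reduce the a.e.\ pointwise sum to a counting argument over the unit cubes that define the Wiener norm. First I would abbreviate $w_k := \Vert g \cdot T_k \chi_Q \Vert_{\infty} = \operatorname*{ess\,sup}_{y \in k+Q}|g(y)|$ for each $k \in \BZ^d$, so that by definition $\Vert g \Vert_{W(\BR^d)} = \sum_{k \in \BZ^d} w_k$. For a fixed $x$, every index $n \in \BZ^d$ sends the translated point $x-an$ into exactly one cube $k(n)+Q$, and on that cube $|g| \le w_{k(n)}$ almost everywhere; thus I expect the pointwise bound $|g(x-an)| \le w_{k(n)}$ to hold for a.e.\ $x$. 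The whole estimate will then follow by summing these local bounds and controlling how many $n$ can share the same cube.

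To make the a.e.\ claim precise, for each pair $(k,n)$ I would consider the set of $x$ with $x-an \in k+Q$ and $|g(x-an)| > w_k$; this is the translate by $an$ of the null set $\{y \in k+Q : |g(y)| > w_k\}$, hence itself null, and discarding the countable union over all pairs $(k,n)$ leaves the inequality valid off a single null set. The heart of the argument is then purely combinatorial: for a fixed cube $k$, the condition $x-an \in k+Q$ unwinds coordinate by coordinate (since $Q=[0,1)^d$) to $\frac{x_j-k_j-1}{a} < n_j \le \frac{x_j-k_j}{a}$, a half-open interval of length $1/a$, which contains at most $1+\tfrac1a$ integers. Taking the product over the $d$ coordinates shows that at most $\big(1+\tfrac1a\big)^d$ indices $n$ can be assigned to any single cube $k$.

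Assembling the pieces, I would write
\[ \sum_{n\in\BZ^d}|g(x-an)| \le \sum_{n\in\BZ^d} w_{k(n)} = \sum_{k\in\BZ^d}\#\{n : k(n)=k\}\, w_k \le \Big(1+\tfrac1a\Big)^d \sum_{k\in\BZ^d} w_k, \]
and the right-hand side is exactly $\big(1+\tfrac1a\big)^d\Vert g\Vert_{W(\BR^d)}$. The only substantive step is the uniform multiplicity bound $\#\{n : k(n)=k\} \le \big(1+\tfrac1a\big)^d$, i.e.\ that the lattice samples $x-an$, $n\in\BZ^d$, land at most $\big(1+\tfrac1a\big)^d$ times in any unit cube; everything else is bookkeeping. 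The one point needing genuine care is the measure-theoretic justification of the pointwise bound, since the cube index $k(n)$ depends on $x$, which is why the null-set argument above is phrased over the pairs $(k,n)$ rather than over $n$ alone.
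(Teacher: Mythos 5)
Your proof is correct and complete: the reduction to the multiplicity bound $\#\{n : x-an \in k+Q\} \le (1+\tfrac1a)^d$ via the coordinatewise interval count, together with the null-set argument over pairs $(k,n)$ to justify the pointwise bound $|g(x-an)| \le w_{k(n)}$ a.e., is exactly the standard argument. Note that the paper itself does not prove this proposition; it only states it with a citation to Gr\"ochenig's book, and your write-up supplies essentially the proof found there, with the measure-theoretic point (which is the only place where care is genuinely needed) handled properly.
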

 For $g,\gamma \in \L1$ and $a,b>0$, define\
\[ G_{a,b;n}(x)= \sum_{k\in {\BZ}^d}{ \overline{g}\left(x-\frac{n}{b}-ak\right) \gamma(x-ak),\;\; n \in {\BZ}^d}. \]
\begin{lem}\label{lem1} 
$($see \cite{sun}, Lemma 3.3 $)$
For any $g,\gamma \in \L1$, we have \begin{equation} \label{eq4.1} \sum_{n \in {\BZ}^d} {\|G_{a,b;n}\|}_{\infty}\leqslant \left(1+\frac{1}{a} \right)^d (2+2b)^d \|g\|_\L1 \|\gamma\|_\L1 , \;\;\forall a,b>0, \end{equation} and \begin{equation}\lim_{(a,b)\rightarrow (0,0)}{\sum_{n \in {\BZ}^d \setminus\{0\}}}{{a^d \|G_{a,b;n}\|}_{\infty}} = 0.\end{equation}
\end{lem}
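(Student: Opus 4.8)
The plan is to establish the uniform bound (\ref{eq4.1}) first, and then to deduce the limit from it together with the decay of the Wiener tails of $g$ and $\gamma$. Throughout I write $M_a h(w) := \sup_{y \in w + Q_a} |h(y)|$ for the (essential) maximum of $|h|$ over the shifted cube $w + Q_a$. The first observation is that reindexing $k \mapsto k - e_j$ shows that $G_{a,b;n}$ is periodic with period $a$ in each coordinate, so that $\|G_{a,b;n}\|_\infty = \sup_{x \in Q_a}|G_{a,b;n}(x)|$. The triangle inequality then gives, for $x \in Q_a$,
\[
|G_{a,b;n}(x)| \le \sum_{k \in \BZ^d}\Bigl|g\bigl(x - \tfrac{n}{b} - ak\bigr)\Bigr|\,|\gamma(x - ak)| .
\]
Since $x \in Q_a$ forces $x - ak \in -ak + Q_a$, we may replace $|\gamma(x-ak)|$ by the constant $M_a\gamma(-ak)$, while $\sup_{x \in Q_a}|g(x - n/b - ak)| = M_a g(-ak - n/b)$. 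Bounding the supremum of the sum by the sum of the suprema yields the per-$n$ estimate
\[
\|G_{a,b;n}\|_\infty \le \sum_{k \in \BZ^d} M_a\gamma(-ak)\,M_a g\bigl(-ak - \tfrac{n}{b}\bigr).
\]

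Summing over $n$ and reversing the order of summation, I would reduce (\ref{eq4.1}) to two geometric sums,
\[
\sum_{n \in \BZ^d}\|G_{a,b;n}\|_\infty \le \sum_{k \in \BZ^d} M_a\gamma(-ak)\Bigl(\sum_{n \in \BZ^d} M_a g\bigl(-ak - \tfrac{n}{b}\bigr)\Bigr),
\]
to be estimated separately. For the inner sum I would show that $M_a g$ again lies in $\L1$, with $\|M_a g\|_{\L1}$ controlled by $\|g\|_{\L1}$ (each cube $w + Q_a$ meets only a bounded number of unit cubes), and then invoke Proposition \ref{com1} with lattice spacing $1/b$ to obtain $\sum_n M_a g(w - n/b) \le (2+2b)^d\|g\|_{\L1}$, uniformly in $w$. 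For the outer sum I would use that the cubes $-ak + Q_a$, $k \in \BZ^d$, tile $\BR^d$, so that $\sum_k M_a\gamma(-ak)$ is a Riemann-type upper sum which a cube-counting argument bounds by $(1+\tfrac1a)^d\|\gamma\|_{\L1}$. Multiplying the two estimates gives (\ref{eq4.1}). I expect the genuine difficulty of this first part to be exactly here: Proposition \ref{com1} supplies only \emph{almost everywhere} pointwise bounds, whereas what must be summed is the family of \emph{suprema} $\|G_{a,b;n}\|_\infty$, and one cannot take a supremum inside Proposition \ref{com1}. Passing to the cube-maxima $M_a g$, $M_a\gamma$ repairs this, and the bookkeeping of the resulting overlap multiplicities (uniform in $a,b>0$) is what produces, and must be arranged to reproduce, the constant $(1+\tfrac1a)^d(2+2b)^d$.

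For the second assertion I would multiply the per-$n$ estimate by $a^d$ and restrict to $n \ne 0$:
\[
a^d\sum_{n \ne 0}\|G_{a,b;n}\|_\infty \le \sum_{k \in \BZ^d}\bigl(a^d M_a\gamma(-ak)\bigr)\sum_{n \ne 0} M_a g\bigl(-ak - \tfrac{n}{b}\bigr).
\]
Fix $\varepsilon>0$. As $g,\gamma \in \L1$, their Wiener tails $\sum_{|m|>R} w_g(m)$ and $\sum_{|m|>R} w_\gamma(m)$ (where $w_h(m)=\|h\cdot T_m\chi_Q\|_\infty$) vanish as $R \to \infty$; choose $L$ making both tails beyond $L$ smaller than $\varepsilon$, and split the $k$-sum at $|ak| \le L$ versus $|ak| > L$. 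On the far range $|ak| > L$, I bound the $n$-sum by its full value $(2+2b)^d\|g\|_{\L1}$ and note that $a^d\sum_{|ak|>L} M_a\gamma(-ak)$ is a Riemann upper sum over $\{|y| > L - a\}$, hence is $\le C\sum_{|m| > L - a - 1} w_\gamma(m) < C\varepsilon$ for $a \le 1$. On the core range $|ak| \le L$ there are only finitely many $k$, and for each of them $n \ne 0$ forces $|-ak - n/b| \ge \tfrac1b - L$, so $M_a g(-ak - n/b)$ only sees $|g|$ at distance $\gtrsim \tfrac1b$; thus $\sum_{n \ne 0} M_a g(-ak - n/b) \le (2+2b)^d\sum_{|m| > \frac1b - L - a - 1} w_g(m)$, a tail of $g$ that tends to $0$ as $b \to 0$ because the threshold tends to $\infty$, and multiplying by the bounded Riemann factor $a^d\sum_{|ak|\le L} M_a\gamma(-ak)$ leaves a quantity tending to $0$. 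Hence $\limsup_{(a,b)\to(0,0)} a^d\sum_{n\ne0}\|G_{a,b;n}\|_\infty \le C\varepsilon$, and letting $\varepsilon \to 0$ gives the stated limit.

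In summary, the conceptual core is short — periodicity, a pointwise triangle estimate, and two applications of Proposition \ref{com1} — while the two subtleties to watch are, for (\ref{eq4.1}), the replacement of $|g|,|\gamma|$ by cube-maxima needed to turn almost-everywhere bounds into a bound on $\sum_n\|G_{a,b;n}\|_\infty$, and, for the limit, the interplay of the two limits: the factor $a^d$ renders the $\gamma$-sum a bounded Riemann sum, while $b \to 0$ drives the $g$-evaluations with $n \ne 0$ into the vanishing Wiener tail, so the split at level $L$ must be organized so that both regimes are simultaneously small.
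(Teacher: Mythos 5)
The paper does not actually prove this lemma --- it is quoted verbatim from Sun \cite{sun} (his Lemma 3.3) and used as a black box --- so there is no in-paper argument to compare yours against. Judged on its own, your proof is essentially correct and follows what is surely the intended route: $a$-periodicity of $G_{a,b;n}$, the pointwise triangle estimate, interchange of the $k$- and $n$-sums, a $(2+2b)^d$-type bound for the $n$-sum uniformly in the base point, a $(1+\frac1a)^d$-type bound for the $k$-sum, and for the limit a split of the $k$-sum at a level $L$ so that the far range is controlled by the Wiener tail of $\gamma$ (uniformly in $a,b\le 1$, thanks to the factor $a^d$) while on the core range the constraint $n\neq 0$ pushes every evaluation of $g$ out to distance $\ge \frac1b-L$, i.e.\ into a Wiener tail of $g$ that vanishes as $b\to 0$. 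The order of quantifiers in the last step (fix $\varepsilon$, choose $L$, then let $(a,b)\to(0,0)$, then $\varepsilon\to 0$) is handled correctly.

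One quantitative caveat: your replacement of $|\gamma|$ by the cube-maxima $M_a\gamma(-ak)$ does not literally reproduce the stated constant. A half-open cube of side $a$ can meet up to two unit cubes per coordinate, and correspondingly a fixed unit cube $m+[0,1)^d$ meets $-ak+Q_a$ for up to $2+\frac1a$ values of $k$ per coordinate, so your outer sum yields $(2+\frac1a)^d\|\gamma\|_{W(\mathbb{R}^d)}$ rather than $(1+\frac1a)^d\|\gamma\|_{W(\mathbb{R}^d)}$. To get exactly $(1+\frac1a)^d$ one should instead apply Proposition \ref{com1} directly to the product $h_n=|T_{n/b}\overline{g}|\cdot|\gamma|$ over the lattice $a\mathbb{Z}^d$, giving $\|G_{a,b;n}\|_\infty\le(1+\frac1a)^d\|h_n\|_{W(\mathbb{R}^d)}$ almost everywhere, and only then sum over $n$ using the submultiplicative estimate $\sum_n\|h_n\|_{W(\mathbb{R}^d)}\le(2+2b)^d\|g\|_{W(\mathbb{R}^d)}\|\gamma\|_{W(\mathbb{R}^d)}$; this also sidesteps the a.e.-versus-supremum issue you flag, since $\|\cdot\|_\infty$ here is the essential supremum. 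This affects only the constant, not any application of the lemma in the paper, since $a^d(2+\frac1a)^d=(2a+1)^d$ is still bounded as $a\to 0$.
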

Using the Walnut representation of the Gabor frame operator on $\LP$ $($see \cite{oko}$)$ with an appropriate modification we get the following proposition.
\begin{prop} \label{com2}
Let $g$,$\gamma$ $\in$ $\L1$ and let $a,b> 0$. Then the operator
\begin{equation}\label{eq10}
(S_{a,b;g,\gamma}f)(x)=\frac{1}{\langle \gamma, g \rangle} \sum_{n \in \mathbb{Z}^d}a^d G_{a,b;n}(x) f\left(x-\frac{n}{b}\right)
\end{equation}
is bounded from $\LP$ to $\LP$, $1$ ${\leq}$ $p,q$ ${\leq}$ ${\infty}$ with operator norm
\[\Vert S_{a,b;g,\gamma} \Vert _ {\LP \rightarrow \LP} \leq \frac{a^d}{|\langle\gamma, g \rangle|} \left(1+\frac{1}{a} \right)^d (2+2b)^d \|g\|_\L1 \|\gamma\|_\L1.\]
\end{prop}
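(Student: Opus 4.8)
The plan is to reduce everything to the summability estimate (\ref{eq4.1}) of Lemma \ref{lem1}. Observe first that the right-hand side of the asserted bound is exactly $\frac{a^d}{|\langle\gamma,g\rangle|}$ times the quantity $\sum_{n\in\BZ^d}\|G_{a,b;n}\|_\infty$ that is controlled on the left of (\ref{eq4.1}); hence it suffices to prove the operator estimate
\[
\|S_{a,b;g,\gamma}f\|_{\LP}\le \frac{a^d}{|\langle\gamma,g\rangle|}\Big(\sum_{n\in\BZ^d}\|G_{a,b;n}\|_\infty\Big)\|f\|_{\LP},
\]
and then insert (\ref{eq4.1}) (whose proof in turn rests on Proposition \ref{com1}). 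In particular, the finiteness of $\sum_n\|G_{a,b;n}\|_\infty$ supplied by Lemma \ref{lem1} is precisely what will force the series defining $S_{a,b;g,\gamma}f$ in (\ref{eq10}) to converge absolutely.

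First I would take absolute values inside the series (\ref{eq10}) and use $|G_{a,b;n}(x)|\le\|G_{a,b;n}\|_\infty$ to obtain the pointwise majorization
\[
|(S_{a,b;g,\gamma}f)(x)|\le \frac{a^d}{|\langle\gamma,g\rangle|}\sum_{n\in\BZ^d}\|G_{a,b;n}\|_\infty\,\big|f(x-\tfrac{n}{b})\big|.
\]
Next, fixing $k\in\BZ^d$, I would restrict to the cube $k+Q$ and apply the triangle inequality in $L^p(k+Q)$ to pass the sum over $n$ outside the local $L^p$ norm; since multiplication by the constant $\|G_{a,b;n}\|_\infty$ merely scales, this leaves the factor $\|T_{n/b}f\|_{L^p(k+Q)}=\|f\|_{L^p(k+Q-\frac{n}{b})}$. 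Taking the $\ell^q$-norm over $k$ and invoking Minkowski's inequality for $\ell^q$ to move $\sum_n$ outside the $\ell^q$ summation (valid for all $1\le p,q\le\infty$) then reduces the whole estimate to
\[
\|S_{a,b;g,\gamma}f\|_{\LP}\le\frac{a^d}{|\langle\gamma,g\rangle|}\sum_{n\in\BZ^d}\|G_{a,b;n}\|_\infty\,\|T_{n/b}f\|_{\LP}.
\]

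The main obstacle is the final translation factor $\|T_{n/b}f\|_{\LP}$: to recover the stated clean constant one needs $\|T_{n/b}f\|_{\LP}\le\|f\|_{\LP}$. This is the single genuinely delicate point, since the amalgam norm built from $\chi_Q$ is literally invariant only under integer shifts — for a general shift $n/b$ a translate of $k+Q$ can meet up to $2^d$ of the reference cubes. I would dispose of it by appealing to the translation-invariance of the amalgam norm in the standard normalization of $\LP$, which controls each $\|T_{n/b}f\|_{\LP}$ by $\|f\|_{\LP}$. Summing over $n$ and inserting (\ref{eq4.1}) then yields the asserted operator-norm bound; the same finiteness in (\ref{eq4.1}) justifies the interchange of $\sum_n$ with the norms and shows that the series (\ref{eq10}) converges absolutely in $\LP$, so that $S_{a,b;g,\gamma}$ indeed maps $\LP$ boundedly into itself for every $1\le p,q\le\infty$.
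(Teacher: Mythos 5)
Your argument is essentially the paper's own proof: apply the triangle inequality to the series over $n$, bound $|G_{a,b;n}|$ by $\|G_{a,b;n}\|_\infty$, invoke translation invariance of the amalgam norm, and sum using the estimate (\ref{eq4.1}) of Lemma \ref{lem1}. You are in fact more explicit than the paper about the one delicate point: the paper asserts $\big(\sum_{k}\|f\cdot T_{k-n/b}\chi_Q\|_p^q\big)^{1/q}=\|f\|_{\LP}$ as an exact equality, whereas (as you note) non-integer shifts are only controlled up to a dimensional constant for the norm built from $\chi_Q$.
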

\begin{proof}
If $g,\gamma \in \L1$ then by previous lemma \[\sum_{n \in {\BZ}^d} {\|G_{a,b;n}\|}_{\infty} < \infty\]
If $f \in \LP$ then\
\begin{eqnarray*}
\Vert S_{a,b;g,\gamma}f \Vert _ \LP &=&{\Vert\frac{1}{\langle \gamma, g \rangle} \sum_{n \in \mathbb{Z}^d}a^d G_{a,b;n} T_{\frac{n}{b}}f \Vert}_\LP \\
&\leq & \frac{a^d}{|\langle\gamma, g \rangle|} \sum_{n \in \mathbb{Z}^d} \Vert G_{a,b;n} T_{\frac{n}{b}}f \Vert_\LP \\
 & = & \frac{a^d}{|\langle\gamma, g \rangle|} \sum_{n \in \mathbb{Z}^d}
\left(\sum_{k \in \mathbb{Z}^d} \Vert G_{a,b;n} T_{\frac{n}{b}}f \cdot T_k \chi_Q \Vert_p ^ q \right)^\frac{1}{q} \\
& \leq & \frac{a^d}{|\langle\gamma, g \rangle|} \sum_{n \in \mathbb{Z}^d}\Vert G_{a,b;n} \Vert_{\infty} \left(\sum_{k \in \mathbb{Z}^d} \Vert f \cdot T_{k-\frac{n}{b}} \chi_Q \Vert_p ^q \right)^\frac{1}{q}\\
& = & \frac{a^d}{|\langle\gamma, g \rangle|} \Vert f \Vert_{\LP} \sum_{n \in \mathbb{Z}^d}\Vert G_{a,b;n} \Vert_{\infty}\\
& \leq & C \; \|g\|_\L1 \|\gamma\|_\L1 \Vert f \Vert_{\LP}\\
\end{eqnarray*}
Where  $C = \frac{a^d}{|\langle\gamma, g \rangle|} \left(1+\frac{1}{a} \right)^d (2+2b)^d . $
\end{proof}

Let \begin{equation} \label{ga}
G_a (x):= \frac{a^d}{\langle\gamma, g \rangle} G_{a,b;0} (x) =  \frac{a^d}{\langle\gamma, g \rangle} \sum_{k \in \mathbb{Z}^d} \overline{g}(x-ak)\gamma(x-ak), \;\; x \in {\BR}^d. 
\end{equation}
By proposition \ref{com1}, we have \
\begin{equation} \label{mo}
M_0 := \sup_{0<a \leqslant 1} \Vert G_a -1 \Vert_{\infty} \leqslant \sup_{0<a \leqslant 1} \frac{1}{|\langle\gamma, g \rangle|} (1+a)^d \Vert \overline{g} . \gamma \Vert _{\L1} < \infty.
\end{equation}
The following lemma is the key to Theorem 1.1.
\begin{lem} \label{lem2}
Let $g,\gamma \in \L1$ and $1 \leqslant p,q \leqslant \infty. $ Then\\
$(i)$ For any $f \in \LP,$ \[\lim_{(a,b)\rightarrow (0,0)} (\Vert S_{a,b;g,\gamma}f-f \Vert_\LP -\Vert (G_a -1)f\Vert_\LP )=0 ; \]
$(ii)$ \[\lim_{(a,b)\rightarrow (0,0)} (\Vert S_{a,b;g,\gamma}-I \Vert_{\LP \rightarrow \LP }-\Vert G_a -1\Vert_\infty )=0. \]
\end{lem}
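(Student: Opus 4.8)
The plan is to isolate the diagonal ($n=0$) term of the Walnut representation in Proposition \ref{com2}, since that term is precisely multiplication by $G_a$. Writing out the operator, the $n=0$ summand equals $\frac{a^d}{\langle\gamma,g\rangle}G_{a,b;0}(x)f(x)=G_a(x)f(x)$ by \eqref{ga}, so I would set
\[
R_{a,b}f := S_{a,b;g,\gamma}f - G_a\cdot f = \frac{1}{\langle\gamma,g\rangle}\sum_{n\in\BZ^d\setminus\{0\}} a^d\, G_{a,b;n}\, T_{n/b}f .
\]
Then $S_{a,b;g,\gamma}f-f=(G_a-1)f+R_{a,b}f$, and the reverse triangle inequality gives
\[
\bigl|\,\Vert S_{a,b;g,\gamma}f-f\Vert_\LP-\Vert(G_a-1)f\Vert_\LP\,\bigr|\leq\Vert R_{a,b}f\Vert_\LP .
\]
Thus both assertions reduce to showing that the off-diagonal remainder $R_{a,b}$ is negligible as $(a,b)\to(0,0)$.

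To control $R_{a,b}$, I would rerun the chain of estimates from the proof of Proposition \ref{com2} verbatim, but with the sum restricted to $n\neq 0$; this produces
\[
\Vert R_{a,b}f\Vert_\LP\leq\frac{1}{|\langle\gamma,g\rangle|}\,\Vert f\Vert_\LP\sum_{n\in\BZ^d\setminus\{0\}} a^d\Vert G_{a,b;n}\Vert_\infty .
\]
By the second limit in Lemma \ref{lem1}, the sum on the right tends to $0$ as $(a,b)\to(0,0)$, hence $\Vert R_{a,b}f\Vert_\LP\to 0$ for each fixed $f\in\LP$; combined with the displayed inequality this proves $(i)$.

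For $(ii)$ the same bound, now read as an operator-norm estimate $\Vert R_{a,b}\Vert_{\LP\to\LP}\leq|\langle\gamma,g\rangle|^{-1}\sum_{n\neq 0}a^d\Vert G_{a,b;n}\Vert_\infty$, again vanishes by Lemma \ref{lem1}. The remaining point is to identify the operator norm of multiplication by $G_a-1$ on $\LP$ with $\Vert G_a-1\Vert_\infty$ (this is finite by \eqref{mo}). The upper bound $\Vert(G_a-1)f\Vert_\LP\leq\Vert G_a-1\Vert_\infty\Vert f\Vert_\LP$ is immediate from the definition of the amalgam norm. For the matching lower bound I would fix $\varepsilon>0$, choose a set $E$ of positive finite measure contained in a single translate $k_0+Q$ of the unit cube on which $|G_a-1|>\Vert G_a-1\Vert_\infty-\varepsilon$, and test against $f=\chi_E$: since $f$ is supported in one cube, $\Vert f\Vert_\LP=\Vert f\Vert_p$ and $\Vert(G_a-1)f\Vert_\LP=\Vert(G_a-1)f\Vert_p\geq(\Vert G_a-1\Vert_\infty-\varepsilon)\Vert f\Vert_p$, valid for every $1\leq p,q\leq\infty$. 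Writing $S_{a,b;g,\gamma}-I$ as multiplication by $G_a-1$ plus $R_{a,b}$ and applying the triangle inequality for the operator norm then yields $\bigl|\Vert S_{a,b;g,\gamma}-I\Vert_{\LP\to\LP}-\Vert G_a-1\Vert_\infty\bigr|\leq\Vert R_{a,b}\Vert_{\LP\to\LP}\to 0$.

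The genuinely substantive input is the off-diagonal decay supplied by Lemma \ref{lem1}; everything else is bookkeeping around the splitting $S_{a,b;g,\gamma}=M_{G_a}+R_{a,b}$. The one step requiring a short self-contained argument rather than a direct citation is the equality of the multiplication-operator norm with $\Vert G_a-1\Vert_\infty$ uniformly over all $1\leq p,q\leq\infty$ (including the endpoints), and I expect the lower-bound half of that identity — producing a test function localized in a single cube where $|G_a-1|$ nearly attains its supremum — to be the most delicate piece of an otherwise routine proof.
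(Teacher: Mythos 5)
Your proof is correct and follows essentially the same route as the paper: split off the $n=0$ term of the Walnut representation as multiplication by $G_a$, bound the off-diagonal remainder in operator norm by $|\langle\gamma,g\rangle|^{-1}\sum_{n\neq 0}a^d\Vert G_{a,b;n}\Vert_\infty$ and kill it with the second limit in Lemma \ref{lem1}, then identify the norm of multiplication by $G_a-1$ on $\LP$ with $\Vert G_a-1\Vert_\infty$. The only difference is that you supply the single-cube test-function argument for the lower bound of that last identity, which the paper simply asserts as ``easy to see.''
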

\begin{proof}
Define operators $T_{a;g,\gamma}$ and $R_{a,b;g,\gamma}$ on $\LP$ by
\[T_{a;g,\gamma} f=(G_a -1)f ,\]
\[R_{a,b;g,\gamma} f= \frac{1}{\langle \gamma, g \rangle} \sum_{n \in \mathbb{Z}^d \setminus \{0\}} a^d G_{a,b;n}\cdot f\left(\cdot-\frac{n}{b}\right),\; f \in \LP. \]\
Now we can rewrite the Walnut’s representation as 
\[S_{a,b;g,\gamma}f - f = T_{a;g,\gamma} f + R_{a,b;g,\gamma} f ,\; \forall f \in \LP. \]
Then we have\
\[ \Vert S_{a,b;g,\gamma}f - f \Vert_{\LP} \leqslant \Vert T_{a;g,\gamma} f \Vert_{\LP }+ \Vert R_{a,b;g,\gamma} f \Vert_{\LP} . \]
By Lemma \ref{lem1}, we have 
\[\lim_{(a,b)\rightarrow (0,0)}{\sum_{n \in {\BZ}^d \setminus\{0\}}}{{a^d \|G_{a,b;n}\|}_{\infty}} = 0.\]
Hence \[\lim_{(a,b)\rightarrow (0,0)} {\Vert R_{a,b;g,\gamma} \Vert}_{\LP \rightarrow \LP} \leqslant \lim_{(a,b)\rightarrow (0,0)}{\sum_{n \in {\BZ}^d \setminus\{0\}}}{{a^d \|G_{a,b;n}\|}_{\infty}} =0. \]
On the other hand, it is easy to see that\
\[ \Vert T_{a;g,\gamma} \Vert_{\LP \rightarrow \LP} = \Vert G_a -1 \Vert_{\infty}.\]
\end{proof}
To prove our main result we make use of the following two lemmas.
\begin{lem} \label{lem3} 
$($See \cite{sun}, Lemma 3.5$)$ Suppose that $ f \in \L1 $ is locally Riemann integrable. Then we have\
\[\lim_{a \rightarrow 0} \sup_{y \in {\BR}^d} \bigg| \sum_{n \in {\BZ}^d} a^d f(y+na) - \int\limits_{{\BR}^d} f(x) dx \bigg| =0. \]
\end{lem}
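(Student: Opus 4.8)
The plan is to reduce the uniform convergence of the Riemann sums to the unweighted ($y=0$) case by exploiting the translation structure, and then to control the error using a uniform bound coming from the Wiener-space membership together with a tail estimate. First I would observe that for fixed $a$, the expression $\sum_{n \in \BZ^d} a^d f(y+na)$ is exactly a Riemann sum of $\int_{\BR^d} f$ with mesh $a$, sampled along the shifted lattice $y + a\BZ^d$; the supremum over $y$ accounts for the worst choice of sample points. Since $f \in \L1$, Proposition \ref{com1} gives the uniform absolute bound $\sum_{n} |f(y+na)| \leq (1+1/a)^d \|f\|_{\L1}$, so each of these sums converges absolutely and uniformly in $y$, which justifies all rearrangements below.

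Next I would split $\BR^d$ into the unit cubes $y + na + Q_a$ (that is, the translates of the cube $Q_a=[0,a)^d$ along the shifted lattice), and write
\[
\int_{\BR^d} f(x)\,dx = \sum_{n \in \BZ^d} \int_{y+na+Q_a} f(x)\,dx,
\]
so that the difference inside the absolute value becomes
\[
\sum_{n \in \BZ^d} \int_{y+na+Q_a} \big( f(y+na) - f(x) \big)\,dx.
\]
On each cube the integrand measures the oscillation of $f$ at scale $a$. The local Riemann integrability hypothesis is precisely what forces this oscillation to be small: on any fixed compact set the oscillation of $f$ over cubes of side $a$ tends to $0$ as $a\to 0$, uniformly in the placement of the cubes (this is the content of Riemann integrability expressed via upper and lower Darboux sums). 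The main obstacle is handling this uniformly over \emph{all} $y \in \BR^d$, since the relevant compact set is not fixed: as $y$ varies the sample points sweep across all of $\BR^d$.

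To overcome this I would separate a central block from the tail. Given $\varepsilon>0$, choose a large radius $R$ so that the Wiener-norm tail $\sum_{|n| > R} \|f \cdot T_n \chi_Q\|_{\infty}$ is smaller than $\varepsilon$; by Proposition \ref{com1} and the finiteness of $\|f\|_{\L1}$ the total contribution of the cubes lying outside a fixed large ball is then $O(\varepsilon)$ uniformly in $y$ for all small $a$, because summing $a^d$ times the local suprema over a fixed number of unit-length shells reproduces the tail of the Wiener norm. On the remaining central block, which meets only a bounded region of $\BR^d$ independent of $a$ (once $y$ is reduced modulo the lattice to lie in $Q_a$, using periodicity of the sampling pattern), $f$ restricted to a fixed compact neighbourhood is Riemann integrable, so its Darboux oscillation sums over cubes of side $a$ tend to $0$; this makes the central contribution at most $\varepsilon$ for all sufficiently small $a$. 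Combining the central and tail estimates and letting $\varepsilon \to 0$ yields the claimed uniform limit. I expect the delicate point to be the reduction of the supremum over $y$ to a bounded region, which relies on the observation that shifting $y$ by a lattice vector $a m$ merely reindexes the sum, so it suffices to take the supremum over $y \in Q_a$, keeping the active sample points within a fixed bounded set.
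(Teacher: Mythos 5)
The paper does not actually prove this lemma; it is imported verbatim from Sun's paper (Lemma 3.5 there) and used as a black box, so there is no in-paper proof to compare against. Your plan is correct and is essentially the standard argument (and, as far as I can tell, the same one Sun uses): reduce to $y\in Q_a$ by reindexing the lattice, rewrite the difference as $\sum_n \int_{y+na+Q_a}(f(y+na)-f(x))\,dx$, control the terms outside a large fixed cube uniformly in $y$ and $a\le 1$ by the tail of the $W(\mathbb{R}^d)$-norm, and bound the central terms by a Darboux oscillation sum, which tends to $0$ uniformly over grid placements by Riemann integrability on the fixed compact cube. All the steps you flag as delicate (the reduction of the supremum to $y\in Q_a$, the uniformity of the oscillation estimate over the offset) are genuinely the right points to check, and your treatment of them is sound.
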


\begin{lem} \label{lem4.5}
$($\cite{sun}$)$ If $ f \in L^p(\BR^d)$ and $ G_a $ is define as in (\ref{ga}) then \[ \lim_{a \rightarrow 0} \Vert (G_a - 1)f \Vert_p =0 \]
\end{lem}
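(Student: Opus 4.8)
The plan is a uniform-boundedness-plus-density argument whose real content is a local mean-square estimate. Write $h=\overline{g}\,\gamma$, so that by (\ref{ga}) one has $G_a-1=\langle\gamma,g\rangle^{-1}\big(a^d\sum_{k\in\BZ^d}h(\cdot-ak)-\int h\big)$, using $\int h=\langle\gamma,g\rangle$. Since $g,\gamma\in\L1$, the product $h$ lies in $\L1=W(L^\infty,\ell^1)\subset L^1\cap L^\infty$, and (\ref{mo}) gives the uniform bound $\sup_{0<a\le1}\|G_a-1\|_\infty=M_0<\infty$. Hence the multiplication operators $f\mapsto(G_a-1)f$ satisfy $\|(G_a-1)f\|_p\le M_0\|f\|_p$ uniformly in $a\in(0,1]$. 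This already forces the restriction $1\le p<\infty$; for $p=\infty$ the conclusion fails, which is consistent with the role of Riemann integrability in part (ii) of Theorem \ref{thm1}.

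First I would reduce to a dense class. Given $f\in L^p(\BR^d)$ and $\varepsilon>0$, pick $\widetilde f\in C_c(\BR^d)$ with $\|f-\widetilde f\|_p<\varepsilon$; then $\|(G_a-1)f\|_p\le M_0\varepsilon+\|(G_a-1)\widetilde f\|_p$, so it suffices to treat $\widetilde f$ continuous with compact support $K$. Because $|G_a-1|\le M_0$, I can use $|G_a-1|^p\le M_0^{p-1}|G_a-1|$ followed by Cauchy--Schwarz on $K$; the entire statement then reduces to the single claim that $G_a\to1$ in $L^2_{\mathrm{loc}}$, i.e. $\int_K|G_a-1|^2\to0$ for every compact $K$.

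The heart of the matter, and the step I expect to be the main obstacle, is precisely this local $L^2$ convergence: since $h$ need not be Riemann integrable, $G_a$ does not converge uniformly (nor pointwise a.e.), so one must extract cancellation. I would analyse the periodization $P_a(x):=a^d\sum_k h(x-ak)$, which is $a\BZ^d$-periodic with Fourier coefficients on a period cube equal to the samples $\widehat h(n/a)$, the mean coefficient being $\widehat h(0)=\int h$. Parseval over one period gives the per-period mean square $a^{-d}\int_{[0,a)^d}|P_a-\int h|^2=\sum_{n\ne0}|\widehat h(n/a)|^2$, and summing over the $\approx|K|a^{-d}$ periods meeting $K$ yields $\int_K|P_a-\int h|^2\le(|K|+o(1))\sum_{n\ne0}|\widehat h(n/a)|^2$. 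By Poisson summation $\sum_n|\widehat h(n/a)|^2=a^d\sum_k R(ak)$, where $R(s)=\int h(x-s)\overline{h(x)}\,dx$ is the autocorrelation of $h$. The decisive point is that, although $h$ itself is merely in $L^1\cap L^\infty$, its autocorrelation $R=h\star\widetilde h$ is \emph{continuous}---hence locally Riemann integrable---and lies in $W(\BR^d)$ (a routine amalgam convolution fact, $W(L^\infty,\ell^1)\star W(L^\infty,\ell^1)\hookrightarrow W(C,\ell^1)$). Thus Lemma \ref{lem3} applies to $R$ and gives $a^d\sum_k R(ak)\to\int R=|\widehat h(0)|^2=|\int h|^2$. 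Subtracting the $n=0$ term leaves $\sum_{n\ne0}|\widehat h(n/a)|^2\to0$, so $\int_K|P_a-\int h|^2\to0$.

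Dividing by $|\langle\gamma,g\rangle|^2$ converts this into $\int_K|G_a-1|^2\to0$, which by the reduction above gives $\|(G_a-1)\widetilde f\|_p\to0$ and hence, through the density step, $\lim_{a\to0}\|(G_a-1)f\|_p=0$ for all $f\in L^p(\BR^d)$. In short, the Riemann integrability of $h$ that fails in general is replaced by the automatic Riemann integrability of its autocorrelation $R$; this is what makes Lemma \ref{lem3} usable and is the crux of the whole argument.
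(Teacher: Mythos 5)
Your proof is correct, but it takes a genuinely different route from the paper's. The paper gets the $p=2$ case ``for free'': it bounds $\Vert S_{a,b;g,g}\Vert$ and $\Vert S_{a,b;\gamma,\gamma}\Vert$ uniformly via Walnut's representation and Lemma \ref{lem1}, invokes Theorem \ref{thm3} (quoted from \cite{sun} without proof) to get $S_{a,b;g,\gamma}f\to f$ in $L^2$, and converts this to $\Vert(G_a-1)f\Vert_2\to 0$ through Lemma \ref{lem2}; it then upgrades to general $1\le p<\infty$ by truncating to $[-A,A]^d$, using Chebyshev on the $L^2$ statement to show $|\{x\in[-A,A]^d:|G_a(x)-1|\ge\epsilon\}|\to 0$, and splitting the integral three ways with absolute continuity of $\int|f|^p$. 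You instead bypass Theorem \ref{thm3} entirely and prove the decisive local $L^2$ convergence of $G_a$ to $1$ from scratch: Parseval on a period cube of the periodization $P_a$, Poisson summation, and the observation that the autocorrelation $R=h\star\widetilde h$ of $h=\overline g\,\gamma$ lands in $W(C,\ell^1)$ and is therefore locally Riemann integrable even when $h$ is not, so that Lemma \ref{lem3} applies to $R$ and kills $\sum_{n\ne 0}|\widehat h(n/a)|^2$. Your subsequent passage to general $p$ (density of $C_c$ plus the uniform bound $M_0$) is cleaner than the paper's three-way splitting. What the paper's route buys is brevity, at the cost of resting on an unproved imported theorem; what yours buys is a self-contained argument that exposes the actual cancellation mechanism --- essentially reproving the relevant core of Sun's result. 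One point you should make explicit: the Poisson summation identity $\sum_n|\widehat h(n/a)|^2=a^d\sum_k R(ak)$ is not automatic for merely continuous integrable $R$, but it does hold here because $R\in W(C,\ell^1)$ makes the periodization continuous and $\widehat R=|\widehat h|^2\ge 0$ makes its Fourier coefficients nonnegative, so the Fourier series converges absolutely and sums to the right value at the origin.
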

\begin{proof}
First, we consider the case of $p=2$.\\
By Walnut’s representation and (\ref{eq4.1}), it is easy to see that
\[ \Vert S_{a,b;g,g} \Vert \leq \frac{1}{\Vert g \Vert_2^2} (1+a)^d (2+2b)^d \Vert g \Vert_{\L1}^2 \leq \frac{8^d}{\Vert g \Vert_2^2} \Vert g \Vert_{\L1}^2 ,\;\; 0<a,b \leq 1 \] and
\[ \Vert S_{a,b;\gamma,\gamma} \Vert \leq \frac{1}{\Vert \gamma \Vert_2^2} (1+a)^d (2+2b)^d \Vert \gamma \Vert_{\L1}^2 \leq \frac{8^d}{\Vert \gamma \Vert_2^2} \Vert \gamma \Vert_{\L1}^2 ,\;\; 0<a,b \leq 1 \]
Now we see from Theorem \ref{thm3} that
\[ \lim_{(a,b)\rightarrow (0,0)} \Vert S_{a,b;g,\gamma}f-f \Vert_2 =0, \;\; \forall f \in L^2(\BR^d). \]
since $ W(L^2, l^2) = L^2(\BR^d) $ then by using Lemma \ref{lem2} , we have \begin{eqnarray} \label{eq4.5}
\lim_{a \rightarrow 0} \Vert (G_a - 1)f \Vert_2 =0, \;\; \forall f \in L^2(\BR^d). 
\end{eqnarray} 
Take some $ f \in L^p(\BR^d) $, $ 1 \leq p < \infty . $ For any $ \epsilon >0 , $ there is some $ A > 0$ such that \[ \int\limits_{{\Vert x \Vert}_{\infty} > A} |f(x)|^p dx < \epsilon^p . \] And there is some $\delta >0$ such that for any measurable set $ E\subset \BR^d $ with $|E|< \delta ,$
\[ \int\limits_E  |f(x)|^p dx < \epsilon^p . \] Here $ | \cdot | $ use to denote the Lebesgue measure of a measurable set. By (\ref{eq4.5}), we have
\[ \lim_{a \rightarrow 0 } |\{ x \in [-A,A]^d : |G_a (x)-1| \geq \epsilon \}| \leq \lim_{ a \rightarrow 0 } \frac{1}{\epsilon^2} \int\limits_{\BR^d}  |G_a (x)-1|^2 \cdot | \chi_{[ -A,A]^d} (x)|^2 dx =0. \]
Hence, we can find some $0<a_0<1$ such that for any $0<a<a_0$, \[ | \{ x \in [-A,A]^d : |G_a (x)-1| \geq \epsilon \} | <\delta. \]
It follows that \begin{eqnarray*}
\Vert (G_a -1)f \Vert_p^p = \int\limits_{\{ x \in [-A,A]^d : |G_a (x)-1| \geq \epsilon \} }  |(G_a (x)-1)f(x)|^p dx \\ +\int\limits_{\{ x \in [-A,A]^d : |G_a (x)-1| < \epsilon \}} |(G_a (x)-1)f(x)|^p dx \\ + \int\limits_{x \notin [-A,A]^d } |(G_a (x)-1)f(x)|^p dx 
\end{eqnarray*}
\[ \leq {M_0}^p {\epsilon^p} + \epsilon^p \Vert f \Vert_p^p +{ M_0}^p {\epsilon^p} , 0<a<a_0 ,\]
where $M_0$ is defined in (\ref{mo}).Hence \[ \lim_{a \rightarrow 0} \Vert (G_a - 1)f \Vert_p =0 .\]
\end{proof}

Now we are in a position to prove Theorem 1.1 .\\
{\bf Proof of Theorem \ref{thm1}.} Let $ f \in \LP$, $1 \leq p,q < \infty . $ \\
Then for any $ \epsilon > 0 $, there exists some $ N \in \BZ $ such that\[ \sum_{{{\Vert k \Vert}_\infty} > N} {\Vert f \cdot T_k \chi_Q \Vert}_p^q < {\epsilon}^q . \]
\begin{eqnarray} \label{eq5}
{\Vert (G_a -1)f \Vert}_{\LP}^q &=& \sum_{k \in \BZ^d} {\Vert (G_a -1)f \cdot T_k \chi_Q \Vert}_p^q \nonumber\\
&=& \sum_{{{\Vert k \Vert}_\infty} > N} {\Vert (G_a -1)f \cdot T_k \chi_Q \Vert}_p^q \nonumber \\
&\;& + \sum_{{{\Vert k \Vert}_\infty} \leqslant N} {\Vert (G_a -1)f \cdot T_k \chi_Q \Vert}_p^q
\end{eqnarray}
\begin{eqnarray} \label{eq6}
\sum_{{{\Vert k \Vert}_\infty} > N} {\Vert (G_a -1)f \cdot T_k \chi_Q \Vert}_p^q & \leq & {\Vert (G_a -1) \Vert}_{\infty}^q \sum_{{{\Vert k \Vert}_\infty} > N} {\Vert f \cdot T_k \chi_Q \Vert}_p^q \nonumber\\
& \leq & {{M}_0}^q {\varepsilon}^q
\end{eqnarray} 
where $M_0$ is defined in (\ref{mo}.)\\
Since
 \[ {\Vert f \cdot T_k \chi_Q \Vert}_p \leqslant \Vert f \Vert_{\LP} < \infty , \;\; \forall  f \in \LP,  \]  for every $k \in \BZ^d$. So $  f \cdot T_k \chi_Q \in L^p $ for every $k \in \BZ^d$. \\
By using Lemma \ref{lem4.5}, on $  f \cdot T_k \chi_Q \in L^p(\BR^d) $ we get,
%and proceeding as in the proof of Theorem 1.1 in \cite{sun}
\begin{eqnarray} \label{eq7}
{\Vert (G_a -1)f \cdot T_k \chi_Q \Vert}_p^q & \leqslant & ( 2 M_0^p \varepsilon^p +\varepsilon^p \Vert f \cdot T_k \chi_{Q} \Vert_p^p )^{\frac{q}{p}} \nonumber \\
& \leqslant & (2 M_0^p \varepsilon^p + \varepsilon^p \Vert f \Vert_{\LP}^p)^{\frac{q}{p}}\nonumber \\
&=& C \varepsilon^q  
\end{eqnarray}
 where $C = (2 M_0^p + \Vert f \Vert_{\LP}^p)^{\frac{q}{p}} $ and for sufficiently small $a$.\\ 
 Replacing $\varepsilon $ by $ \frac{\varepsilon}{(2N+1)^{\frac{d}{q}}}\;\; $\
and taking summation on both side of last inequality we get,\\
 \begin{eqnarray} \label{eq8}
\sum_{{{\Vert k \Vert}_\infty} \leqslant N} {\Vert (G_a -1)f \cdot T_k \chi_Q \Vert}_p^q \leqslant C \sum_{{{\Vert k \Vert}_\infty}\leqslant N} \frac{{\varepsilon}^q}{(2N+1)^d} &=& C \varepsilon^q
\end{eqnarray}
Now form(\ref{eq6}), (\ref{eq8}) in (\ref{eq5}) we get \
\begin{eqnarray*}
{\Vert (G_a -1)f \Vert}_{\LP}^q & \leqslant & M_0^q {\varepsilon}^q +C {\varepsilon}^q 
\end{eqnarray*}
Hence \[ \lim_{a\rightarrow 0}{\Vert (G_a -1)f \Vert}_{\LP} =0 \]
Now from Lemma \ref{lem2} that for any $ f \in \LP $ , $ 1 \leqslant p,q < \infty $ ,\
\[\lim_{(a,b)\rightarrow (0,0)}  \Vert S_{a,b;g,\gamma}f-f \Vert_\LP = 0 \]
This proves (\ref{eq1}) for $ 1 \leqslant p,q < \infty $.\\
For $ q = \infty $ with the obvious modification similar result can be obtained. Now we show that (\ref{eq1}) is not true for $p= \infty $ by producing a counter example.
\begin{example} 
For simplicity, we consider only the case of $d=1$. Take some $E \subset [0,1] $ such that $E$ is nowhere dense and is of positive measure. Let $g=\gamma=\chi_E$. For any $a>0$, we have 
\[ \{ x \in [0,1] : G_a(x)>0 \} = \bigcup\limits_{ n \in \BZ} (na+E)\cap [0,1]= \bigcup\limits_{ \Vert n \Vert_{\infty} \leq \frac{1}{a} } (na+E)\cap[0,1]. \]
Since each of $na+E$ is nowhere dense, so is $\bigcup\limits_{ \Vert n \Vert_{\infty} \leq \frac{1}{a} } (na+E)$. Therefore, $ \{ x \in [0,1] : G_a(x)>0 \} $ is nowhere dense. Hence \[ | \{ x \in [0,1] : |G_a(x) -1|=1 \}| \geq |\{ x \in [0,1] : G_a(x)=0 \}| >0. \]
Let $ f_0 = \chi_{[0,1]}. $ Then we have $\Vert(G_a -1)f_0 \Vert_\infty \geq 1,\;\; \forall a>0.$\\
Now \[ \Vert(G_a -1)f_0 \Vert_{W(L^\infty, \ell^q)}^q = \sum_{k \in \BZ} \Vert (G_a-1)\chi_{[0,1]} \cdot \chi_{[k,k+1]} \Vert_{\infty}^q \geq \Vert (G_a-1)\chi_{[0,1]} \Vert_{\infty}^q \] That is, \[ \Vert(G_a -1)f_0 \Vert_{W(L^\infty, \ell^q)} \geq 1, \;\; \forall a>0. \]
By Lemma \ref{lem2},$ \lim\limits_{(a,b)\rightarrow (0,0)} \Vert S_{a,b;g,\gamma}f_0-f_0 \Vert_{W(L^\infty, \ell^q)} \geq 1. $ That is,(\ref{eq1}) fails for $p=\infty.$

Since $ \overline{g} \cdot \gamma $ is locally Riemann integrable, we see from Lemma \ref{lem3} that \[ \lim_{(a,b) \rightarrow (0,0)} \Vert G_a -1\Vert_{\infty} =0.\]
Using Lemma \ref{lem2} again, we get
\[\lim_{(a,b)\rightarrow (0,0)} \Vert S_{a,b;g,\gamma}-I \Vert_{\LP \rightarrow \LP } = 0. \;\;\;\;\;\;\;\;\; \square \]    
\end{example}
\begin{rmk}
Before this result there were few results on weak convergence of Gabor expansions (\cite{fei, str, zim, hei, oko, sun}) but no results on the convergence of Gabor expansions in the operator norm on Wiener amalgam spaces. So this is a new interesting result in this topic. Also as $p=q$ we have $ W(L^p, \ell^p) = L^p(\BR^d)$, so this result extends Sun's (\cite{sun}) result.
\end{rmk}

\section{Important results on Gabor frame operator}
In this section we present Janssen's representation of Gabor frame operators $S_{a,b;g,\gamma}$ and the biorthogonality condition of Wexler-Raz on $\LP$. In \cite{wex} J. Wexler and S. Raz obtained  biorthogonality condition and in \cite{jen} Janssen presented his representation for Gabor frame operator. The relevance of this identity for the study of Gabor frame have pointed out in \cite{fei0, luf, zim, hei}. Now we will establish those result in new setting. To achieve this goal we discuss the necessary theory required, as in (\cite{gro}, page 130).
 
First we expand $G_{a,b;n}$ into its Fourier series. The $l$-th Fourier coefficient of $G_{a,b;n}$ is
\begin{eqnarray*}
\hat{G}_{a,b;n}(l) &=& a^{-d} \int_{Q_a} G_{a,b;n}(x)e^{- 2 \pi i \langle l,x/a \rangle} dx\\
&=& a^{-d} \int_{Q_a} \sum_{k \in \BZ^d}(T_{\frac{n}{b}}\overline{g}\cdot\gamma)(x-ak)e^{- 2 \pi i \langle l,x/a \rangle} dx\\
&=& a^{-d} \int_{\BR^d}(T_{\frac{n}{b}}\overline{g}\cdot\gamma)(x)e^{- 2 \pi i \langle l,x/a \rangle} dx\\
&=& a^{-d} \langle \gamma, M_{\frac{l}{a}}T_{\frac{n}{b}}g \rangle.
\end{eqnarray*}
Since $G_{a,b;n} \in L^{\infty}(Q_a)\subseteq L^{2}(Q_a)$ by Lemma \ref{lem1}, $G_{a,b;n}$ has the Fourier series 
\begin{equation}\label{eq9}
G_{a,b;n}(x)=a^{-d}\sum_{l \in \BZ^d} \langle \gamma, M_{\frac{l}{a}}T_{\frac{n}{b}}g \rangle e^{2 \pi i \langle l,x/a \rangle}
\end{equation}
with convergence in $L^2(Q_a)$. Now substitute this into Walnut's representation (\ref{eq10}), we obtain
\begin{eqnarray*}
S_{a,b;g,\gamma}f &=& \frac{a^d}{\langle \gamma, g \rangle} \sum_{n \in \mathbb{Z}^d} G_{a,b;n} \cdot T_{\frac{n}{b}}f \\
&=& \frac{1}{\langle \gamma, g \rangle} \mathop{\sum\sum}_{l, n \in \BZ^d}\langle \gamma, M_{\frac{l}{a}}T_{\frac{n}{b}}g \rangle M_{\frac{l}{a}}T_{\frac{n}{b}}f, 
\end{eqnarray*} 
in operator notation,
\begin{equation}\label{eq11}
S_{a,b;g,\gamma}=\frac{1}{\langle \gamma, g \rangle} \mathop{\sum\sum}_{l,n \in \BZ^d}\langle \gamma, M_{\frac{l}{a}}T_{\frac{n}{b}}g \rangle M_{\frac{l}{a}}T_{\frac{n}{b}}.
\end{equation}

Now the question is, when this series will converge. If $g,\gamma \in L^2$ then it is not clear how the Fourier series in (\ref{eq9}) represents $G_{a,b;n}$. To avoid this we will take $g,\gamma$ in such a way that the series (\ref{eq9}) and (\ref{eq11}) will converge definitely. In the investigations of Gabor families by Tolimieri-Orr \cite{tol}, and Janssen \cite{jen}, the following technical condition on windows $g, \gamma$ were used. Here we write that condition as definition.  
\begin{defn}
A pair of window functions $(g,\gamma)$  in $L^2(\BR^d)$ satisfies condition $($A' $)$ for the parameters $a,b >0$ if
\begin{equation}\label{eq12}
\sum_{l,n \in \BZ^d} |\langle \gamma, M_{\frac{l}{a}}T_{\frac{n}{b}}g \rangle |< \infty
\end{equation}
If $g=\gamma$, then $g$ is said to satisfy condition $($A$)$ for the parameters $a,b>0$ if
\begin{equation}\label{eq13}
\sum_{l,n \in \BZ^d} |\langle g, M_{\frac{l}{a}}T_{\frac{n}{b}}g \rangle |< \infty
\end{equation}
\end{defn}
Now the condition $($A' $)$ guarantees the absolute convergence of the series expansions (\ref{eq9}) and (\ref{eq11}). However condition $($A' $)$ is not always satisfied even for $g,\gamma \in W(\BR^d)$ (an example given in \cite{gro}, p.132). So we have to put more condition on window function. If we consider $g,\gamma $ as in Feichtinger's algebra $S_{0}(\BR^d)$, then the condition ($A$') is satisfied together for all $a,b>0.$ Now with this hypothesis we derive representation of the Gabor frame operator $S_{a,b;g,\gamma}$. A version of Janssen's representation can be found in \cite{gry}. However a similar result for the Janssen's representation for frame operator $S_{a,b;g,\gamma}$ is proved by taking the window functions $g,\gamma \in S_0(\BR^d)$ in the following theorem.
\begin{thm}
(Janssen's representation) Suppose that $g,\gamma \in S_0(\BR^d)$. Then for all $a,b >0$ frame operator $S_{a,b;g,\gamma}$ can be expressed as follows:
\begin{eqnarray*}
S_{a,b;g,\gamma} &=& \frac{1}{\langle \gamma, g \rangle} \mathop{\sum\sum}_{l, n \in \BZ^d} \langle \gamma, M_{\frac{l}{a}}T_{\frac{n}{b}}g \rangle M_{\frac{l}{a}}T_{\frac{n}{b}}\\
&=& \frac{1}{\langle \gamma, g \rangle} \mathop{\sum\sum}_{k,n \in \BZ^d}\langle \gamma, T_{\frac{k}{b}}M_{\frac{n}{a}}g \rangle T_{\frac{k}{b}}M_{\frac{n}{a}}
\end{eqnarray*}
and converges absolutely in the operator norm.
\end{thm}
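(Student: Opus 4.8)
The plan is to make rigorous the purely formal computation already carried out in the text leading to (\ref{eq11}); the entire content of the theorem lies in the \emph{convergence}, and every convergence statement will be reduced to condition $($A$'$) in (\ref{eq12}). First I would record the two structural facts that set up the argument: that $S_0(\BR^d) \subset \L1$, so that Walnut's representation (\ref{eq10}) and the boundedness from Proposition \ref{com2} are available; and that the hypothesis $g,\gamma \in S_0(\BR^d)$ guarantees $\sum_{l,n}|\langle \gamma, M_{\frac{l}{a}}T_{\frac{n}{b}}g\rangle| < \infty$ for \emph{all} $a,b>0$, as recalled before the statement. This single summability estimate is the engine for everything that follows.

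Next I would establish the absolute convergence in operator norm. The key observation is that each time-frequency shift $M_{\frac{l}{a}}T_{\frac{n}{b}}$ is bounded on $\LP$ with a bound independent of $l,n$: modulation preserves $|f|$ and hence acts isometrically on $\LP$, while translation by an arbitrary real vector is bounded on $\LP$ with a uniform constant $C_d$ depending only on $d,p,q$. Consequently the double operator series is dominated in operator norm by $\frac{C_d}{|\langle\gamma,g\rangle|}\sum_{l,n}|\langle\gamma,M_{\frac{l}{a}}T_{\frac{n}{b}}g\rangle|$, which is finite by $($A$'$). This proves that the right-hand side of the first displayed identity converges absolutely in $B(\LP)$ and defines a bounded operator.

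Then I would verify the two equalities. Since $($A$'$) gives, for each fixed $n$, $\sum_l |\langle\gamma,M_{\frac{l}{a}}T_{\frac{n}{b}}g\rangle| < \infty$, the Fourier series (\ref{eq9}) for $G_{a,b;n}$ converges not merely in $L^2(Q_a)$ but absolutely and uniformly, so that identity holds pointwise. Substituting it into Walnut's representation (\ref{eq10}) and using $M_{\frac{l}{a}}T_{\frac{n}{b}}f(x) = e^{2\pi i\langle l,x/a\rangle}f(x-n/b)$, one checks termwise that $a^d\, a^{-d}\langle\gamma,M_{\frac{l}{a}}T_{\frac{n}{b}}g\rangle\, e^{2\pi i\langle l,x/a\rangle}f(x-n/b) = \langle\gamma,M_{\frac{l}{a}}T_{\frac{n}{b}}g\rangle\,(M_{\frac{l}{a}}T_{\frac{n}{b}}f)(x)$; the absolute convergence just established legitimizes interchanging the sums over $l$ and $n$, yielding the first line. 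For the second equality I would invoke the commutation relation $M_\omega T_x = e^{2\pi i\langle\omega,x\rangle}T_x M_\omega$ with $\omega=l/a,\, x=n/b$: the phase $e^{2\pi i\langle l,n\rangle/(ab)}$ produced when rewriting $M_{\frac{l}{a}}T_{\frac{n}{b}}$ as $T_{\frac{n}{b}}M_{\frac{l}{a}}$ is exactly cancelled by the conjugate phase picked up in the coefficient $\langle\gamma,M_{\frac{l}{a}}T_{\frac{n}{b}}g\rangle$, so each summand equals $\langle\gamma,T_{\frac{n}{b}}M_{\frac{l}{a}}g\rangle\,T_{\frac{n}{b}}M_{\frac{l}{a}}$. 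Relabelling the dummy indices $(l,n)\mapsto(n,k)$ and again using absolute convergence to rearrange gives the dual form.

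The main obstacle I anticipate is not the algebra but the two ``upgrade'' steps in the second and third paragraphs: promoting the $L^2(Q_a)$-convergence of (\ref{eq9}) to the uniform convergence needed for a pointwise, then operator-valued, identity, and controlling the non-integer translations $T_{\frac{n}{b}}$ uniformly on $\LP$ rather than on $L^2$, where they would simply be isometries. Both reductions ultimately rest on $($A$'$), so the crux is to deploy the $S_0$-membership of $g$ and $\gamma$ precisely enough to license the absolute summability of the coefficients for every $a,b>0$.
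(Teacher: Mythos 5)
Your proposal is correct and follows essentially the same route as the paper's own proof: invoke condition (A$'$) (guaranteed by $g,\gamma\in S_0(\BR^d)$) for absolute operator-norm convergence, regroup the double sum over $l$ for fixed $n$ to recover the Fourier series (\ref{eq9}) of $G_{a,b;n}$, and identify the result with Walnut's representation (\ref{eq10}). You in fact supply two details the paper leaves implicit --- the uniform boundedness of the shifts $M_{\frac{l}{a}}T_{\frac{n}{b}}$ on $\LP$ and the phase-cancellation argument via $M_\omega T_x = e^{2\pi i\langle\omega,x\rangle}T_xM_\omega$ that yields the second (adjoint-lattice) form of the representation, which the paper's proof does not address at all.
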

\begin{proof}
Let $\tilde{S}_{a,b;g,\gamma} := \frac{1}{\langle \gamma, g \rangle} \mathop{\sum\sum}\limits_{l, n \in \BZ^d} \langle \gamma, M_{\frac{l}{a}}T_{\frac{n}{b}}g \rangle M_{\frac{l}{a}}T_{\frac{n}{b}}$ and we want show $S_{a,b;g,\gamma}=\tilde{S}_{a,b;g,\gamma}$. As $g,\gamma \in S_0(\BR^d)$, so by condition (A') the series for $\tilde{S}_{a,b;g,\gamma}$ converges absolutely in operator norm and hence its expression is independent of the order of summations. Therefore
\begin{eqnarray*}
\tilde{S}_{a,b;g,\gamma} &=& \frac{a^d}{\langle \gamma, g \rangle} \sum_{n \in \BZ^d} \left(a^{-d} \sum_{l \in \BZ^d} \langle \gamma, M_{\frac{l}{a}}T_{\frac{n}{b}}g \rangle e^{2 \pi i \langle l,x/a \rangle} \right) T_{\frac{n}{b}}\\
&=& \frac{a^d}{\langle \gamma, g \rangle} \sum_{n \in \BZ^d} G_{a,b;n} \cdot T_{\frac{n}{b}}=S_{a,b;g,\gamma}
\end{eqnarray*}
by using (\ref{eq9}) and Walnut's representation.
\end{proof}
Next we state and prove the Wexler-Raz biorthogonality condition for frame operator $S_{a,b;g,\gamma}$ on $\LP$. In \cite{wex} the authors found a exceptional relation between window $g$ and dual window $\gamma$. Their conditions characterize all dual windows. Here we make use of Theorem \ref{thm1} and present a version of that important result.
\begin{thm}\label{th5}
(Wexler-Raz biorthogonality)Assume $g,\gamma \in S_0(\BR^d)$. Then for any $ 1 \leq p < \infty$ and $ 1 \leq q \leq \infty$ the following conditions are equivalent:

(i) $ \lim\limits_{(a,b)\rightarrow (0,0)}S_{a,b;g,\gamma}f=\lim\limits_{(a,b)\rightarrow (0,0)}S_{a,b;\gamma,g}f=f$ on $\LP$.

(ii) $\lim\limits_{(a,b)\rightarrow (0,0)} \frac{1}{\langle \gamma, g \rangle} \langle \gamma, M_{\frac{l}{a}}T_{\frac{n}{b}}g \rangle=\delta_{l0} \delta_{n0}$ for $l,n \in \BZ^d.$
\end{thm}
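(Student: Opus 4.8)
The plan is to use Janssen's representation (\ref{eq11}) as the bridge between the two conditions, reading the quantities in $(ii)$ as the coefficients of $S_{a,b;g,\gamma}$ in its expansion over the time-frequency shifts $\{M_{l/a}T_{n/b}\}_{l,n\in\BZ^d}$. Set $c_{l,n}(a,b):=\frac{1}{\langle\gamma,g\rangle}\langle\gamma,M_{l/a}T_{n/b}g\rangle$. Since $M_0T_0=I$ and $\langle\gamma,M_0T_0 g\rangle=\langle\gamma,g\rangle$, the coefficient $c_{0,0}(a,b)\equiv 1$ for every $a,b>0$, so the $(l,n)=(0,0)$ term of (\ref{eq11}) is exactly the identity and
\[ S_{a,b;g,\gamma}-I=\frac{1}{\langle\gamma,g\rangle}\mathop{\sum\sum}_{(l,n)\neq(0,0)}\langle\gamma,M_{l/a}T_{n/b}g\rangle\,M_{l/a}T_{n/b}. \]
Thus $(ii)$ is precisely the statement that each off-diagonal coefficient $c_{l,n}(a,b)$ tends to $0$, and the theorem reduces to a comparison between convergence of the operator and convergence of its Janssen coefficients.

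For $(ii)\Rightarrow(i)$ I would pass to the limit term by term in this series. Each $M_{l/a}T_{n/b}$ acts on $\LP$ with norm bounded by a constant $K$ independent of $l,n,a,b$, because modulation is an isometry on $\LP$ (the amalgam norm sees only $|f|$) and translation is uniformly bounded on $\LP$; hence
\[ \Vert S_{a,b;g,\gamma}-I\Vert_{\LP\to\LP}\leq\frac{K}{|\langle\gamma,g\rangle|}\mathop{\sum\sum}_{(l,n)\neq(0,0)}\big|\langle\gamma,M_{l/a}T_{n/b}g\rangle\big|. \]
To show the right-hand side vanishes I would use that for $g,\gamma\in S_0(\BR^d)$ the windowed transform satisfies $F_g\gamma\in W(C,\ell^1)(\BR^{2d})$ and that $\langle\gamma,M_{l/a}T_{n/b}g\rangle=F_g\gamma(n/b,l/a)$ samples $F_g\gamma$ on the lattice $\tfrac1b\BZ^d\times\tfrac1a\BZ^d$. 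For $a,b\in(0,1]$ this lattice is coarser than $\BZ^{2d}$, so the off-diagonal sum is dominated uniformly by $\Vert F_g\gamma\Vert_{W(C,\ell^1)}$, while as $(a,b)\to(0,0)$ the lattice coarsens and its nonzero nodes escape to infinity, where the $W(C,\ell^1)$-tails of $F_g\gamma$ are small; this forces the off-diagonal sum to $0$. Hence $\Vert S_{a,b;g,\gamma}-I\Vert_{\LP\to\LP}\to0$, which yields the strong convergence in $(i)$, and the same argument with $g,\gamma$ interchanged handles $S_{a,b;\gamma,g}$.

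For the converse $(i)\Rightarrow(ii)$ I would recover each coefficient by testing the operator against time-frequency atoms. Fix $(l,n)$ and a single $h\in S_0(\BR^d)$ with $\Vert h\Vert_2\neq 0$, and expand $\langle S_{a,b;g,\gamma}h,\,M_{l/a}T_{n/b}h\rangle$ by (\ref{eq11}). The diagonal term $(l',n')=(l,n)$ contributes exactly $c_{l,n}(a,b)\Vert h\Vert_2^2$, while every other term carries a factor $\langle M_{l'/a}T_{n'/b}h,M_{l/a}T_{n/b}h\rangle$ equal, up to a unimodular constant, to $\langle h,M_{(l-l')/a}T_{(n-n')/b}h\rangle$; for $(l',n')\neq(l,n)$ the lattice displacement runs off to infinity as $(a,b)\to(0,0)$, so this factor tends to $0$ by the decay of $F_hh\in W(C,\ell^1)$. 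Using the uniform summability of $\{c_{l',n'}(a,b)\}$ from the previous paragraph as a majorant, dominated convergence over the index set shows the off-diagonal contribution is $o(1)$, whence $\langle S_{a,b;g,\gamma}h,M_{l/a}T_{n/b}h\rangle=c_{l,n}(a,b)\Vert h\Vert_2^2+o(1)$. On the other hand $(i)$ gives $\Vert S_{a,b;g,\gamma}h-h\Vert_{\LP}\to0$, and since $M_{l/a}T_{n/b}h$ stays bounded in the K\"othe dual $W(L^{p'},\ell^{q'})$, boundedness of the pairing forces $\langle S_{a,b;g,\gamma}h-h,M_{l/a}T_{n/b}h\rangle\to0$; combined with $\langle h,M_{l/a}T_{n/b}h\rangle\to\delta_{l0}\delta_{n0}\Vert h\Vert_2^2$ this gives $c_{l,n}(a,b)\to\delta_{l0}\delta_{n0}$, which is $(ii)$.

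The main obstacle throughout is the domination step: upgrading the termwise decay $c_{l,n}(a,b)\to0$ to control of the \emph{entire} off-diagonal sum, uniformly for small $a,b$. This is exactly where membership in $S_0(\BR^d)$ is indispensable, since it supplies $F_g\gamma\in W(C,\ell^1)$ together with the sampling estimate on coarsening lattices; for general $g,\gamma\in\L1$ the series need not converge absolutely and condition $(A')$ (\ref{eq12}) may fail. The coefficient extraction in $(i)\Rightarrow(ii)$ is the secondary delicate point, where one must check that the off-diagonal cross terms are genuinely negligible in the limit and not merely termwise small.
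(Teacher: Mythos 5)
Your argument is essentially correct, but it follows a genuinely different route from the paper's in both directions. For $(ii)\Rightarrow(i)$ the paper merely observes that the biorthogonality limits make condition (A$'$) hold for small $a,b$ and then quotes Theorem \ref{thm1} to get strong convergence; you instead prove the stronger operator-norm statement directly, by isolating the $(l,n)=(0,0)$ term of Janssen's representation as the identity and dominating the off-diagonal sum by samples of $F_g\gamma\in W(C,\ell^1)(\BR^{2d})$ on a lattice whose nonzero nodes escape to infinity. This is a clean, self-contained argument (it presupposes Theorem 5.2, i.e.\ that the Janssen series really equals the Walnut-defined operator, which the paper has just established for $S_0$ windows), and it has the side effect of exposing that under the standing hypothesis $g,\gamma\in S_0(\BR^d)$ your proof of $(ii)\Rightarrow(i)$ never actually uses $(ii)$ --- both $(i)$ and $(ii)$ hold unconditionally for such windows, since $F_g\gamma\in W(C,\ell^1)\subset C_0$ already forces $\langle\gamma,M_{l/a}T_{n/b}g\rangle\to0$ for $(l,n)\neq(0,0)$. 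For $(i)\Rightarrow(ii)$ the paper pairs the Walnut representation of $S_{a,b;g,\gamma}T_{l/b}f$ against $T_{m/b}h$, discards all but one term of the sum over $n$, and appeals to uniqueness of Fourier coefficients of the periodized functions $G_{a,b;n}$; your coefficient-extraction via $\langle S_{a,b;g,\gamma}h,M_{l/a}T_{n/b}h\rangle$ avoids that unjustified term-dropping and handles the cross terms explicitly through the decay of $F_hh$ together with the uniform summability of the Janssen coefficients (a crude bound separating the $(l',n')=(0,0)$ term from the rest suffices in place of a formal dominated-convergence appeal). In short, your proof buys operator-norm rather than strong convergence in one direction and a more careful cross-term analysis in the other, at the cost of leaning more heavily on the $W(C,\ell^1)$ sampling machinery for $S_0$ windows.
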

\begin{proof}
$(i)\Rightarrow (ii)$ Let $f \in \LP$ and $h \in W(L^{p'},\ell^{q'})$ (K\"othe dual of $\LP$) and assume that $\lim\limits_{(a,b)\rightarrow (0,0)}S_{a,b;g,\gamma}f=f$. Let $l,m \in \BZ^d$ be  arbitrary. Then  
\begin{eqnarray*}
\delta_{lm}\langle f,h \rangle &=& \langle \lim\limits_{(a,b)\rightarrow (0,0)} S_{a,b;g,\gamma} T_{\frac{l}{b}}f, T_{\frac{m}{b}}h  \rangle\\
&=& \lim\limits_{(a,b)\rightarrow (0,0)} \frac{a^d}{\langle \gamma,g \rangle} \langle \sum_{n \in \mathbb{Z}^d} G_{a,b;n} \cdot T_{\frac{n+l}{b}}f, T_{\frac{m}{b}}h  \rangle\\
&=& \lim\limits_{(a,b)\rightarrow (0,0)} \frac{a^d}{\langle \gamma,g \rangle} \langle G_{a,b; m-l} \cdot T_{\frac{m}{b}}f, T_{\frac{m}{b}}h \rangle\\
&=& \lim\limits_{(a,b)\rightarrow (0,0)} \frac{a^d}{\langle \gamma,g \rangle} \langle (T_{-\frac{m}{b}}G_{a,b;m-l})f,h \rangle
\end{eqnarray*}
So we conclude that $\lim\limits_{(a,b)\rightarrow (0,0)} \frac{a^d}{\langle \gamma,g \rangle}G_{a,b;m-l}(\cdot + \frac{m}{b})f=\delta_{lm}f$. Now varying $l,m \in \BZ^d$, we get $\lim\limits_{(a,b)\rightarrow (0,0)} \frac{a^d}{\langle \gamma,g \rangle}G_{a,b;0}(\cdot)f=f$ and $\lim\limits_{(a,b)\rightarrow (0,0)}\frac{a^d}{\langle \gamma,g \rangle} G_{a,b;n}(\cdot)f=0$ when $n\neq 0$. We already know from (\ref{eq9}) that $G_{a,b;n}$ has the Fourier series
\begin{eqnarray*}
G_{a,b;n}(x) &=& a^{-d}\sum_{l \in \BZ^d} \langle \gamma, M_{\frac{l}{a}}T_{\frac{n}{b}}g \rangle e^{2 \pi i \langle l,x/a \rangle} \\ 
 \Rightarrow \lim\limits_{(a,b)\to (0,0)}\frac{a^d}{\langle \gamma,g \rangle} G_{a,b;n}(x)f(x) &=& \lim\limits_{(a,b)\to (0,0)}\frac{1}{\langle \gamma,g \rangle}\sum_{l \in \BZ^d} \langle \gamma, M_{\frac{l}{a}}T_{\frac{n}{b}}g \rangle e^{2 \pi i \langle l,x/a \rangle}f(x)
\end{eqnarray*}
We conclude by using uniqueness of Fourier coefficients that
\[ \lim\limits_{(a,b)\rightarrow (0,0)} \frac{1}{\langle \gamma, g \rangle} \langle \gamma, M_{\frac{l}{a}}T_{\frac{n}{b}}g \rangle=\delta_{l0} \delta_{n0} .\]
The implication $(ii)\Rightarrow (i)$ follows form Janssen's representation: If the biorthogonality condition (ii) is satisfied
then 
\[ \lim\limits_{(a,b)\rightarrow (0,0)} \sum_{l,n \in \BZ^d}|\langle \gamma, M_{\frac{l}{a}}T_{\frac{n}{b}}g \rangle|=\sum_{l,n \in \BZ^d} | \langle \gamma, g \rangle \delta_{l0} \delta_{n0}|= |\langle \gamma, g \rangle | < \infty, \]
i.e., for arbitrary small $a,b>0$, $\sum\limits_{l,n \in \BZ^d}|\langle \gamma, M_{\frac{l}{a}}T_{\frac{n}{b}}g \rangle| < \infty $, i.e., the pair $(g,\gamma)$ satisfies condition ($A$') for arbitrary small $a,b>0$ and hence the representation (\ref{eq11}) converges in the operator norm. Hence by Theorem \ref{thm1}, for any $ 1 \leq p < \infty$ and $ 1 \leq q \leq \infty$, $\lim\limits_{(a,b)\rightarrow (0,0)}S_{a,b;g,\gamma}f=f$ on $\LP$.
\end{proof}
The next corollary follows immediately from Theorem \ref{th5}.
\begin{cor}
In the assumption of Theorem \ref{th5}, if  $\overline{g} \cdot \gamma$ is locally Riemann integrable then for any  $ 1 \leqslant p,q \leqslant \infty,$ the following conditions are equivalent:

(i) $ \lim\limits_{(a,b)\rightarrow (0,0)}S_{a,b;g,\gamma}=\lim\limits_{(a,b)\rightarrow (0,0)}S_{a,b;\gamma,g}=I$ on $B(\LP)$.

(ii) $\lim\limits_{(a,b)\rightarrow (0,0)} \frac{1}{\langle \gamma, g \rangle} \langle \gamma, M_{\frac{l}{a}}T_{\frac{n}{b}}g \rangle=\delta_{l0} \delta_{n0}$ for $l,n \in \BZ^d.$ 
\end{cor}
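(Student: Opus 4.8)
The plan is to read the corollary as the operator-norm counterpart of Theorem \ref{th5}, reusing that proof where possible and invoking the extra hypothesis only where the operator norm and the endpoint $p=\infty$ push us beyond its scope. The key observation is that condition (ii) concerns the window pair $(g,\gamma)$ alone and involves neither $p$ nor $q$; hence it suffices to extract it from a single instance, and once it is known it applies for all $p,q$ simultaneously. I also note that $\overline{\gamma}\cdot g$ is the complex conjugate of $\overline{g}\cdot\gamma$, so local Riemann integrability of the latter is inherited by the pair $(\gamma,g)$, putting $S_{a,b;\gamma,g}$ on the same footing as $S_{a,b;g,\gamma}$.

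For $(i)\Rightarrow(ii)$ I would first observe that convergence in $B(\LP)$ forces strong convergence, since $\|S_{a,b;g,\gamma}f-f\|_{\LP}\le\|S_{a,b;g,\gamma}-I\|_{\LP\to\LP}\,\|f\|_{\LP}$. With strong convergence available I would rerun the computation from the proof of Theorem \ref{th5}: pair $S_{a,b;g,\gamma}T_{l/b}f$ against $T_{m/b}h$ for $h$ in the K\"othe dual $W(L^{p'},\ell^{q'})$, collapse the Walnut sum to the single surviving block $G_{a,b;m-l}$, and read off $\lim_{(a,b)\to(0,0)}\frac{a^d}{\langle\gamma,g\rangle}G_{a,b;m-l}=\delta_{lm}$. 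Because this uses only the K\"othe-dual pairing (defined for all $1\le p,q\le\infty$) and the uniqueness of the Fourier coefficients of $G_{a,b;n}$ in (\ref{eq9}), it produces (ii) over the whole range, endpoint included.

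For $(ii)\Rightarrow(i)$ the operator-norm conclusion follows from the Riemann-integrability hypothesis: since $\overline{g}\cdot\gamma$ is locally Riemann integrable, Lemma \ref{lem3} gives $\|G_a-1\|_\infty\to0$, and then Lemma \ref{lem2}$(ii)$ — equivalently Theorem \ref{thm1}$(ii)$ — yields $\|S_{a,b;g,\gamma}-I\|_{\LP\to\LP}\to0$ for every $1\le p,q\le\infty$; the same argument applied to $(\gamma,g)$ gives $\|S_{a,b;\gamma,g}-I\|_{\LP\to\LP}\to0$, so (i) holds. Condition (ii) itself reconfirms, via the summability (\ref{eq12}) and Janssen's representation (\ref{eq11}), that these operator limits are taken along a norm-convergent series, exactly as in Theorem \ref{th5}. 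The one genuine obstacle is precisely this passage from strong to operator-norm convergence and the inclusion of $p=\infty$, both of which lie outside Theorem \ref{th5}; the local Riemann integrability of $\overline{g}\cdot\gamma$ is the hypothesis that removes it, and without it the $p=\infty$ counterexample of Section 4 shows that even strong convergence can fail, so the operator-norm statement could not be expected to hold.
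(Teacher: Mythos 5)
Your proposal follows essentially the same route as the paper: for $(i)\Rightarrow(ii)$ you pass from operator-norm to strong convergence and rerun the pairing argument of Theorem \ref{th5}, and for $(ii)\Rightarrow(i)$ you invoke the local Riemann integrability of $\overline{g}\cdot\gamma$ together with Lemma \ref{lem3}, Lemma \ref{lem2} and Theorem \ref{thm1}$(ii)$, exactly as the paper does. Your added observations --- that the K\"othe-dual pairing covers the endpoint $p=\infty$ and that $\overline{\gamma}\cdot g$ inherits local Riemann integrability as the conjugate of $\overline{g}\cdot\gamma$ --- are minor refinements of, not departures from, the paper's argument.
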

\begin{proof}
$(i) \Rightarrow (ii)$ follows from the fact that $\lim\limits_{(a,b)\rightarrow (0,0)}S_{a,b;g,\gamma}=I$ on $B(\LP)$ $\Rightarrow \lim\limits_{(a,b)\rightarrow (0,0)}S_{a,b;g,\gamma}f=f$ on $\LP$.\\
The implication $(ii)\Rightarrow (i)$ follows form Janssen's representation and Theorem \ref{thm1}: If the biorthogonality condition (ii) is satisfied then the pair $(g,\gamma)$ satisfies condition ($A$') for arbitrary small $a,b>0$ and the frame operator $S_{a,b;g,\gamma}$ converges in the operator norm. Since $\overline{g} \cdot \gamma$ is locally Riemann integrable then by using Theorem \ref{thm1} we conclude that for any  $ 1 \leqslant p,q \leqslant \infty,$ 
$ \lim\limits_{(a,b)\rightarrow (0,0)}S_{a,b;g,\gamma}=\lim\limits_{(a,b)\rightarrow (0,0)}S_{a,b;\gamma,g}=I$ on $B(\LP)$.
\end{proof}

%{\bf Concluding remark:} Recent results (\cite{por}) have shown weak convergence of Hilbert space valued Gabor expansions on weighted amalgam spaces. So if we consider Hilbert space valued Gabor expansions then whether it converge to identity operator in the operator norm, seems to be a reasonable question. Also same question one can ask for modulation spaces as we know weak convergence of Gabor expansions on modulation spaces (\cite{wei}). For the time being we leave these two questions as open. In section 5, we reinvestigated two known results: the Janssen's representation and the Wexler-Raz biorthogonality on $\LP$ for Gabor frame operator and these are two important results in this context.

\section*{Acknowledgments}
The author is deeply indebted to Prof. Hans G. Feichtinger for several valuable suggestions concerning Theorem \ref{th5} and for important references in this context. The author wish to thank Dr. J. Swain for several fruitful discussions. The author also wishes to thank the Ministry of Human Resource Development, India for the  research fellowship and Indian Institute of Technology Guwahati, India for the support provided during the period of this work. Further, the author thanks the anonymous referee for very valuable suggestions which helped to improve the paper.


\begin{thebibliography}{99}

\bibitem{gry} {\sc H.G. Feichtinger}, {\sc A. Grybos} and {\sc D.M. Onchis}, Approximate dual Gabor atoms via the adjoint lattice method, {\em Adv. Comput. Math.}, Vol. {\bf 40}, No. {\bf 3} (2014), 651--665.

\bibitem{fei0} {\sc H.G. Feichtinger } and {\sc K. Gr\"ochenig}, Gabor Frames and Time-Frequency Analysis of Distributions, {\em J. Funct. Anal.}, {\bf 146} (1997), 464--495.

\bibitem{luf} {\sc H.G. Feichtinger } and {\sc F. Luef}, Wiener amalgam spaces for the fundamental identity of Gabor analysis, {\em Collect. Math.}, {\bf 57} (Extra Volume (2006)), (2006), 233-–253.
\bibitem{fei} {\sc H.G. Feichtinger } and {\sc F. Weisz}, Gabor Analysis on Wiener amalgams,  {\em Sampl. Theory Signal Image Process}, Vol.{\bf 6}, No.{\bf 2} (2007),129--150.
\bibitem{str} {\sc H.G. Feichtinger } and {\sc T. Strohmer} (Eds.), Gabor Analysis: Theory and Applications, Birkh\"auser, Boston, 1998.
\bibitem{zim} {\sc H.G. Feichtinger } and {\sc G. Zimmermann}, A Banach space of test functions for Gabor analysis, in Gabor Analysis: Theory and Applications, Birkh\"auser, Boston, 1998, 123--170.  
\bibitem{fou} {\sc J.J.F. Fournier} and {\sc J. Stewart}, Amalgams of $L^p$ and $l^q$, {\em Bull. Amer. Math. Soc.}, Vol. {\bf 13}, No. {\bf 1}, (1985), 1--21.
  \bibitem{gro}  {\sc K. Gr\"ochenig}, Foundations of Time-Frequency Analysis, Birkh\"auser, Boston, 2001. 
  \bibitem{hei} {\sc K. Gr\"ochenig} and {\sc C.Heil}, Gabor meets Littlewood-Paley: Gabor expansions in $L^p({\BR}^d)$, Studia math. {\bf 146} (2001), 15--33.
\bibitem{oko} {\sc K. Gr\"ochenig}, {\sc C.Heil} and {\sc K. Okoudjou}, Gabor analysis in weighted amalgam spaces, {\em Sampl. Theory Signal Image Process}, {\bf 1} (2002), 225--259.

\bibitem{lei1} {\sc K. Gr\"ochenig} and {\sc M. Leinert}, Wiener's lemma for twisted convolution and Gabor frames, {\em J. Amer. Math. Soc.}, {\bf 17} (2004), 1--18.

\bibitem{lei2} {\sc K. Gr\"ochenig} and {\sc M. Leinert}, Symmetry and inverse-closedness of matrix algebras and symbolic calculus for infinite matrices, {\em Trans. Amer. Math. Soc.}, {\bf 358} (2006), 2695–-2711.

\bibitem{hei} {\sc C. Heil}, History and Evolution of the Density Theorem for Gabor Frames, {\em J. Fourier Anal. Appl.}, {\bf 13(2)} (2007) 113–-166.
\bibitem{jen} {\sc A. J. E. M. Janssen}, Duality and biorthogonality for Weyl-Heisenberg frames, {\em J. Fourier Anal. Appl.}, {\bf 1(4)} (1995), 403--436. 
\bibitem{por} {\sc A. Poria} and {\sc J. Swain}, Hilbert space valued Gabor frames in weighted amalgam spaces, DOI: arXiv:1508.01646v2

\bibitem{sun}  {\sc W. Sun}, Asymptotic properties of Gabor frame operators as sampling density tends to infinity, {\em J. Funct. Anal.} {\bf 258} (2010), 913--932.
\bibitem{tol} {\sc R. Tolimieri} and {\sc R. S. Orr}, Poisson summation, the ambiguity function, and the theory of Weyl-Heisenberg frames, {\em J. Fourier Anal. Appl.}, {\bf 1(3)} (1995), 233--247.  
 \bibitem{wal}  {\sc D.F.Walnut}, Continuity properties of the Gabor frame operator, {\em J. Math. Anal. Appl.} {\bf 165} (1992), 479--504. 
    \bibitem{wei}  {\sc F. Weisz}, Inversion of the short-time Fourier transform using Riemannian sums, {\em J. Fourier Anal. Appl.} {\bf 13}(2007), 357--368.
 \bibitem{wex} {\sc J. Wexler} and {\sc S. Raz}, Discrete Gabor expansions, {\em Signal Proc.}, {\bf 21(3)} (1990), 207--221. 
 %\bibitem{weis}  {\sc F. Weisz}, Invertibility of the Gabor frame operator on on some function space, {\em Acta Math. Hungar.}, {\bf 144(1)} (2014), 167--181.

\end{thebibliography}
\end{document}